\documentclass{amsart}
\usepackage[utf8]{inputenc}
\usepackage{amsmath, amssymb, bbm}
\usepackage{amsthm}
\usepackage{mathtools}
\usepackage{bbm}
\usepackage{blkarray}
\usepackage[matrix,arrow,curve]{xy}
\usepackage{color}
\usepackage{xcolor}
\usepackage{graphicx}
\definecolor{darkblue}{rgb}{0,0,0.6}
\usepackage[ocgcolorlinks,colorlinks=true, citecolor=darkblue, filecolor=darkblue, linkcolor=darkblue, urlcolor=darkblue]{hyperref}
\usepackage[capitalize,noabbrev]{cleveref}
\usepackage{comment}
\usepackage{adjustbox}
\usepackage{enumerate}

\usepackage{tikz}
\usetikzlibrary{
  cd,
  calc,
  positioning,
  arrows,
  decorations.pathreplacing,
  decorations.markings,
}

\usepackage[dvips,%
    includehead,%
    includefoot,%
    nomarginpar,%
    lmargin=1.2in,%
    rmargin=1.2in,%
    tmargin=1.4in,%
    bmargin=1.4in,%
  ]{geometry}


\newcounter{commentcounter}

\newcommand{\ignore}[1]{}
\renewcommand{\epsilon}{\varepsilon}
\renewcommand{\phi}{\varphi}
\newcommand{\bbZ}{\mathbbm{Z}}
\newcommand{\Z}{\mathbbm{Z}}

\newcommand{\R}{\mathbbm{R}}
\newcommand{\CP}{\mathbb{CP}}
\newcommand{\RP}{\mathbb{RP}}

\newcommand{\wt}{\widetilde}
\newcommand{\ol}{\overline}
\newcommand{\Sq}{\mathrm{Sq}}
\newcommand{\red}{\mathrm{red}}

\newcommand{\sm}{\setminus}
\newcommand{\pt}{\mathrm{pt}}
\newcommand{\ter}{\mathfrak{ter}}
\newcommand{\msec}{\mathfrak{sec}}
\newcommand{\pri}{\mathfrak{pri}}
\newcommand{\SC}{\mathcal{SC}}
\newcommand{\RC}{\mathcal{RC}}

\newcommand{\wh}{\widehat}
\newcommand{\ba}{\begin{array}}
\newcommand{\ea}{\end{array}}

\newcommand{\imra}{\looparrowright}
\newcommand{\RPT}{\mathbb{RP}^2}

\newcommand{\spin}{\mathrm{Spin}}

\DeclareMathOperator{\Aut}{Aut}

\DeclareMathOperator{\Hom}{Hom}

\DeclareMathOperator{\im}{im}

\DeclareMathOperator{\Rad}{Rad}

\DeclareMathOperator*{\colim}{colim}
\DeclareMathOperator{\Arf}{Arf}

\DeclareMathOperator{\cs}{cs}

\numberwithin{equation}{section}
\newtheorem{thm}[equation]{Theorem}
\newtheorem{pre-thm}[equation]{Pre-Theorem}

\newtheorem{question}[equation]{Question}

\newtheorem{lemma}[equation]{Lemma}

\theoremstyle{definition}

\newtheorem{example}[equation]{Example}
\newtheorem{defi}[equation]{Definition}
\newtheorem{definition}[equation]{Definition}

\newtheorem{remark}[equation]{Remark}

\crefname{lemma}{Lemma}{Lemmas}
\crefname{section}{Section}{Sections}
\crefname{definition}{Definition}{Definitions}
\crefname{defi}{Definition}{Definitions}
\crefname{prop}{Proposition}{Propositions}
\crefname{theorem}{Theorem}{Theorems}
\crefname{thm}{Theorem}{Theorems}
\crefname{cor}{Corollary}{Corollaries}
\crefname{remark}{Remark}{Remarks}

\title[The Kervaire--Milnor invariant in the stable classification of spin $4$-manifolds]{The Kervaire--Milnor invariant in the stable classification\\ of spin $4$-manifolds}

\author{Daniel Kasprowski}
\address{School of Mathematical Sciences, University of Southampton, United Kingdom}
\email{d.kasprowski@soton.ac.uk}

\author{Mark Powell}
\address{School of  Mathematics and Statistics, University of Glasgow, United Kingdom}
\email{mark.powell@glasgow.ac.uk}

\author{Peter Teichner}
\address{Max Planck Institut f\"{u}r Mathematik, Vivatsgasse 7, 53111 Bonn, Germany}
\email{teichner@mac.com}

\def\subjclassname{\textup{2020} Mathematics Subject Classification}
\expandafter\let\csname subjclassname@1991\endcsname=\subjclassname
\subjclass{
57K40. 
}

\begin{document}

\begin{abstract}
We consider the role of the Kervaire--Milnor invariant in the classification of closed, connected, spin $4$-manifolds, typically denoted by $M$, up to stabilisation by connected sums with copies of $S^2 \times S^2$.  This stable classification is detected by a spin bordism group over the classifying space $B\pi$ of the fundamental group. Part of the computation of this bordism group via an Atiyah--Hirzebruch spectral sequence is determined by a collection of codimension two Arf invariants. We show that these Arf invariants can be computed by the Kervaire--Milnor invariant evaluated on certain elements of $\pi_2(M)$.  In particular this yields a new stable classification of spin $4$-manifolds with 2-dimensional fundamental groups, namely those for which $B\pi$ admits a finite 2-dimensional CW-complex model.
\end{abstract}

\maketitle

\section{Introduction}
Two smooth, closed, connected, oriented 4-manifolds $M$ and $N$ are called \emph{stably diffeomorphic} if there exist integers $m,n \in \mathbb{N}_0$ such that
\[M \#^m (S^2 \times S^2) \cong N \#^n (S^2 \times S^2),
\]
where $\cong$ denotes diffeomorphism.
We require that the diffeomorphism respects orientations, and we will always assume without comment that manifolds are \emph{smooth and connected}.
Note that we allow $m \neq n$, but $n-m = (\chi(M) - \chi(N))/2$, so by only considering 4-manifolds with the same Euler characteristic one can enforce $m=n$.

The problem of giving algebraic invariants that determine whether two 4-manifolds are stably diffeomorphic is the \emph{stable classification problem}.  For example, the isometry class of the equivariant intersection form on the second homotopy group, up to stabilisation by hyperbolic forms, is such an invariant.
Two closed, simply-connected 4-manifolds are stably diffeomorphic if and only if their intersection forms have the same parity and equal signatures.


Given an immersed $2$-sphere $S$ in $M$ with vanishing self-intersection number, the \emph{Kervaire--Milnor invariant} arises as a secondary obstruction to homotoping~$S$ to an embedding. Our aim in this article is to explain its role in the stable classification problem for spin 4-manifolds.
There is an additional condition, namely that $S$ is \emph{$\RPT$-characteristic}, under which the Kervaire--Milnor invariant $\tau(S) \in \Z/2$ is well-defined, depending only on the homotopy class $[S] \in \pi_2(M)$. We defer the precise definitions to \cref{sub-sec:review-KM}.

For every closed, oriented 4-manifold $M$ with $\pi= \pi_1(M)$ and 2-connected map $f\colon M\to B\pi$, the radical $\Rad(\lambda_{M}) \subseteq \pi_2(M)$ of the intersection form $\lambda_M$, which by definition is the kernel of the adjoint $\lambda_M^{\text{ad}} \colon \pi_2(M) \to \pi_2(M)^*$, is isomorphic to $H^2(\pi;\Z\pi)$. In more detail, the map $PD \circ f^* \colon H^2(\pi;\Z\pi) \to H_2(M;\Z\pi) \xrightarrow{\cong} \pi_2(M)$ is injective and has image $\ker \lambda_M^{\text{ad}} \subseteq \pi_2(M)$, by~\cite[Corollary~3.2]{HKT}.
Also, write $\red_2 \colon H^2(\pi;\Z\pi) \to H^2(\pi;\Z/2)$ for the map induced by modulo two augmentation, which we also denote by  $\red_2 \colon \Z\pi \to \Z/2$.
Define
\[\Sq :=(\Sq^2\circ \red_2) \colon  H^2(\pi;\Z\pi)\to H^4(\pi;\Z/2).\]
Note that $\Sq^2 \colon H^2(\pi;\Z/2) \to H^4(\pi;\Z/2)$ is the map $x \mapsto x \cup x$.

By restricting to the radical, $[S] \in \Rad(\lambda_{M})$, we guarantee vanishing self-intersection number.
For spin~$M$  we will show that restricting further to $PD(f^*(\ker \Sq )) \subseteq \Rad(\lambda_{M})$ ensures that $S$ is $\RPT$-characteristic, and thus that $\tau$ gives rise to a stable diffeomorphism invariant of spin $4$-manifolds, as follows.
We will restrict to spin $M$ for the remainder of the article.

\begin{thm}\label{thm1}
Let $M$ be a closed, spin 4-manifold with a $2$-connected map $f \colon M \to B\pi$.
\begin{enumerate}[(i)]
\item\label{thm1:item1} For every element $x \in \ker \Sq \subseteq H^2(\pi;\Z\pi)$, its image $PD(f^*(x)) \in \pi_2(M)$  has trivial self-intersection number and is $\RPT$-characteristic, so the Kervaire--Milnor invariant \[\tau(PD(f^*(x)))  \in \Z/2\] is well-defined.
\item\label{thm1:item2}  The induced map $\tau_{M,f} \colon \ker \Sq \to \Z/2$ factors through $\Z/2\otimes_{\Z\pi} \ker \Sq $.
\item\label{thm1:item3}  If a closed, spin $4$-manifold $N$ is stably diffeomorphic to $M$, then there exists a $2$-connected map $g \colon N \to B\pi$, so in particular $\pi_1(N) \cong \pi$, such that $\tau_{M,f} = \tau_{N,g}$ as maps $\ker \Sq \to \Z/2$.
  \end{enumerate}
\end{thm}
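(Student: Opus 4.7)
The plan is to establish the three parts in order, with (i) and (ii) being relatively direct consequences of known properties of the Kervaire--Milnor invariant combined with the spin hypothesis, and (iii) requiring the bulk of the work.

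For part (i), the containment $PD \circ f^*(x) \in \Rad(\lambda_M)$ is immediate from the HKT identification cited above, giving vanishing self-intersection. To verify that the image is $\RPT$-characteristic, my plan is to express the $\RPT$ obstruction as a pairing living in group cohomology. Any immersed $\RPT \to M$ is homotopic into the $3$-skeleton and hence essentially factors through $B\pi$, so the mod $2$ intersection of $PD \circ f^*(x)$ with such an $\RPT$ is computed by pairing $\red_2(x) \in H^2(\pi;\Z/2)$ against a class coming from the $\RPT$. The spin hypothesis kills the Wu class $v_2(M)$, and the remaining obstruction becomes exactly $\Sq^2 \red_2(x) \in H^4(\pi;\Z/2)$, which vanishes by assumption.

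For part (ii), I would invoke additivity of the Kervaire--Milnor invariant on $\RPT$-characteristic classes in the radical. Since $\Sq$ is additive, $\ker \Sq$ is closed under addition, and by part (i) the sum of two elements in the image of $PD \circ f^*|_{\ker \Sq}$ is again a valid input for $\tau$. This yields $\Z$-linearity of $\tau_{M,f}$ and hence factorisation through $\ker \Sq \otimes_\Z \Z/2$. For $\Z\pi$-balancing, it remains to show $\tau(g \cdot x) = \tau(x)$ for $g \in \pi$: a representative sphere $S$ for $x$ and its $g$-translate (geometrically, the same sphere but with the basing shifted in the universal cover) produce identical local intersection data, so the invariant agrees.

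For part (iii), the strategy is to show that $\tau_{M,f}(x)$ is a spin bordism invariant of $(M,f)$, from which stable diffeomorphism invariance follows, since $M$ and $M \#(S^2 \times S^2)$ are spin bordant over $B\pi$. The cleanest route, following the outline of the abstract, is to identify $\tau_{M,f}(x)$ with one of the codimension-two Arf invariants detecting classes in $\Omega_4^{\spin}(B\pi)$ via the Atiyah--Hirzebruch spectral sequence; these Arf invariants are by construction spin bordism invariants, so the identification yields (iii) immediately. A more direct geometric alternative would be: given a spin bordism $W^5$ between $(M,f)$ and $(N,g)$, extend $x$ using a relative version of the HKT identification, represent it by an immersed surface with boundary in $W$, and use an Arf-type cobordism argument to compare $\tau_{M,f}(x)$ with $\tau_{N,g}(x)$. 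The main obstacle will be this identification of $\tau$ with the AHSS Arf invariant: bridging the secondary-obstruction definition of $\tau$ (counting self-intersections of an immersed sphere weighted by a quadratic refinement) with the homotopy-theoretic differential producing the Arf invariants. I expect this to require an explicit geometric model for the relevant AHSS differential in terms of immersed surfaces arising from a handle decomposition of $M$, after which (iii) follows formally.
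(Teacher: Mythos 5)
Your plan for part (i) is essentially right and close to the paper's argument: fix a map $\beta\colon\RP^2\to M$, note that $f\circ\beta$ extends over $\RP^\infty$ because $B\pi$ is aspherical, and use the fact that $\Sq^2$ is injective on $H^2(\RP^\infty;\Z/2)$ to show that $\langle\red_2\beta^*f^*x,[\RP^2]\rangle\neq 0$ would force $\Sq(x)\neq 0$. Your phrasing ``the remaining obstruction becomes exactly $\Sq^2\red_2(x)$'' is a little loose since the obstruction is a pairing against each map of $\RP^2$ separately, but the underlying idea is the one the paper uses.

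Part (ii) has a real gap. You invoke additivity of $\tau$ on $\RPT$-characteristic classes in the radical to get $\Z$-linearity of $\tau_{M,f}$. But the paper explicitly poses, immediately after stating this theorem, the open question of whether $\tau_{M,f}$ is always a homomorphism; additivity is only proved later (Theorem~\ref{thm:main-technical-thm-intro}\eqref{it:main2}) under the extra hypothesis that $f$ factors through a $2$-complex, via a detour through codimension-two Arf invariants. So you cannot assume it here. What part (ii) actually needs is weaker: one must show that $\tau(y)=\tau(y+\sum\kappa_iz_i)$ when $\kappa_i\in\ker(\red_2\colon\Z\pi\to\Z/2)$ and $\mu(y)=\mu(z_i)=\lambda(y,z_i)=0$. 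This is exactly \cite[Lemma~8.3]{KLPT15}, which the paper cites. Your ``$\Z\pi$-balancing'' remark ($\tau(g\cdot x)=\tau(x)$) is a special case but not enough by itself, and in any case you need the claim that adding a $\kappa z$ with $\kappa\in\ker\red_2$ leaves $\tau$ unchanged, not merely invariance under single translations.

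Part (iii) is overbuilt and, as framed, circular. Identifying $\tau$ with an AHSS Arf invariant and then arguing via spin bordism over $B\pi$ is essentially the content of the paper's Theorem~\ref{thm:main-technical-thm-intro}, which only holds when $f$ factors through a $2$-complex; Theorem~\ref{thm1} makes no such assumption, so you cannot lean on that identification. The paper's actual argument for (iii) is far more elementary: the invariant $\tau$ is computed from intersection data that lives in $M$ minus a ball, so it is unchanged under connected sum with $S^2\times S^2$ (which also does not alter the radical or the $\ker\Sq$ source), and it is preserved by orientation-preserving diffeomorphisms. Stable diffeomorphism invariance, up to the choice of classifying map $g$, then follows directly without any appeal to spin bordism.
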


In \cref{thm:main-technical-thm-intro} below, we explain how $\tau_{M,f}$ appears in the general stable classification programme, in the case that $f$ factors through a 2-dimensional complex.
\cref{thm:main-technical-thm-intro} requires some more background in order to state, so for now we present its application in the case of geometrically 2-dimensional fundamental groups. A group $\pi$ is (geometrically) $d$-dimensional if $d$ is the least integer for which the classifying space $B\pi$ admits a finite $d$-dimensional CW-complex model. If $d < \infty$ then each $d$-dimensional group is torsion-free. Note that for a $2$-dimensional group $H^4(\pi;\Z/2)=0$ so $\ker \Sq = H^2(\pi;\Z\pi)$.

\begin{thm}\label{thm:geom-2-dim-gps}
  Let $\pi$ be a 2-dimensional group, let $M$ and $N$ be closed, spin 4-manifolds, and let $f\colon M\to B\pi$ be a $2$-connected map.
  \begin{enumerate}[(i)]
  \item\label{it:thm1.2.1} The map $\tau_{M,f}$ is a homomorphism, i.e.\ an element of $\Hom_{\Z\pi}(H^2(\pi;\Z\pi),\Z/2)$.
  \item The $4$-manifolds $M$ and $N$ are stably diffeomorphic if and only if
  \begin{enumerate}[(a)]
    \item\label{item:geom-2-dim-0} the signatures of $M$ and $N$ are equal, and
    \item\label{item:geom-2-dim-3}
   There exists a $2$-connected map $g\colon N\to B\pi$, so in particular $\pi_1(N) \cong \pi$, such that
    the Kervaire--Milnor invariants $\tau_{M,f}$ and $\tau_{N,g}$ coincide in $\Hom_{\Z\pi}(H^2(\pi;\Z\pi);\Z/2)$.
  \end{enumerate}
  \end{enumerate}
\end{thm}

Hambleton, Kreck, and the third author~\cite{HKT} previously classified $4$-manifolds with 2-dimensional fundamental groups, up to $s$-cobordism, in terms of the equivariant intersection form. They also assumed that the assembly map $H_4(B\pi;\mathbb{L}\langle 1 \rangle) \to L_4(\Z\pi)$ is injective, where $\mathbb{L}\langle 1\rangle$ denotes the 1-connected cover of the $L$-theory spectrum of the integers~\cite{Ranicki-blue-book}. This in particular holds for a torsion-free group $\pi$ whenever the Farrell--Jones conjecture holds for $\pi$ by \cite[Lemma~3.3]{hambleton-stability}.
While Theorem~\ref{thm:geom-2-dim-gps} only concerns the stable classification, it has the advantage that to apply it one only needs to compute a relatively small number of Kervaire--Milnor invariants, compared with computing the entire intersection form.

By \cref{thm:geom-2-dim-gps}, the map $\tau_{M,f}$ is a homomorphism if $\pi$ is 2-dimensional. In \cref{thm:main-technical-thm-intro}, we will see that  this is also the case whenever $f$ factors through a 2-dimensional complex. But in general the following question remains open.

\begin{question}
	Is the map $\tau_{M,f} \colon \ker \Sq \to \Z/2$ always a homomorphism?
\end{question}

\begin{example}\label{example:surface-groups}
Let $\Sigma$ be a closed, oriented surface with positive genus and suppose $\pi = \pi_1(M) \cong \pi_1(\Sigma)$. Then the radical of $\lambda_M$ is isomorphic to $H^2(\pi;\Z\pi)\cong H^2(\Sigma;\Z\pi)\cong H_0(\Sigma;\Z\pi) \cong \Z$. In this case our classification is particularly efficient since it requires the computation of just a single Kervaire--Milnor invariant $\tau(S)$, where $[S]$ generates $\Z/2\otimes_\Z \Rad(\lambda_{M}) \cong \Z/2$. In particular, $\tau_{M,f}$ is independent of the choice of $f$.

Among closed, smooth 4-manifolds with $\pi_1(M) \cong \pi_1(\Sigma)$ and signature zero, there are two stable diffeomorphism classes. The class with trivial $\tau_M$ is represented by $M=\Sigma \times S^2$ where the radical $\Rad(\lambda_M) = \pi_2(M) \cong \Z$ is generated by an embedded sphere $\{\pt\}\times S^2$. The second stable diffeomorphism class is represented by a 4-manifold $M'$ constructed from $\Sigma \times T^2$ by performing surgery on framed circles representing a dual pair of generators of $\pi_1(T^2) \cong \Z^2$, where the framing of the circles is ``twisted''.  The generator of $\Rad(\lambda_{M'}) \cong \Z$ cannot be represented by an embedding, even stably.
\end{example}

As mentioned above, \cref{thm:geom-2-dim-gps} follows from our main technical theorem, \cref{thm:main-technical-thm-intro}, which we will explain below. First we review the definition of the Kervaire--Milnor invariant and the reformulation of the stable diffeomorphism question into bordism theory by Kreck~\cite[Theorem~C]{surgeryandduality}.

\subsection{Review of the Kervaire--Milnor invariant}\label{sub-sec:review-KM}

The Kervaire--Milnor invariant appeared previously in \cite{Freedman-Quinn}, \cite{Stong}, and \cite{schneiderman-teichner-tau}, following a closely related invariant defined in~\cite{Kirby-Freedman, Matsumoto}.
A version of this invariant was used by Freedman and Quinn to detect the Kirby--Siebenmann obstruction to smoothing the topological tangent bundle of a simply-connected topological 4-manifold with odd intersection form, in particular detecting the difference between $\mathbb{CP}^2$ and its star partner $*\mathbb{CP}^2$.
This is somewhat orthogonal to the appearance of this invariant in the stable classification of spin 4-manifolds, since for spin topological 4-manifolds, it follows from Rochlin's theorem that the Kirby--Siebenmann invariant is computed as the signature divided by $8$, and then modulo two.

Let $M$ be a smooth, spin $4$-manifold with fundamental group $\pi$ and equivariant intersection form
 \begin{align*}
   \lambda_M\colon \pi_2(M) \times \pi_2(M) &\to \Z\pi\\
   (x,y) & \mapsto \langle PD^{-1}(y),x\rangle.
 \end{align*}
To recall the definition of the Kervaire--Milnor invariant, suppose that $x \in \pi_2(M) \cong H_2(M;\Z\pi)$ satisfies $\lambda_M(x,x)=0$. Then we can represent $x$ by a generic immersion $S \colon S^2 \looparrowright M$ whose double points can be paired up by generically immersed Whitney discs $\{W_i\}$; see e.g.\ \cite[Proposition~11.10]{the-book-ch11}, or \cite{KPRT}.
By boundary twisting and pushing down~\cite[Sections~1.3~and~2.5]{Freedman-Quinn}, the Whitney discs can be chosen to be disjointly embedded, framed, and to intersect $S$ transversely.  Then the \emph{Kervaire--Milnor invariant} of $S$ is
\[\tau(S;\{W_i\}) := \sum_i |\mathring{W}_i \pitchfork S| \mod{2}.\]
Suppose that $x$ is \emph{$\RPT$-characteristic}, meaning that for every map $R \colon \RPT \to M$, \[\lambda_2(\red_2(x) , [R]) = 0,\]
where $\red_2 \colon \pi_2(M) \cong H_2(M;\Z\pi) \to H_2(M;\Z/2)$ is again the map induced by the modulo two augmentation, $\lambda_2 \colon H_2(M;\Z/2) \times H_2(M;\Z/2) \to \Z/2$ is the mod 2 intersection pairing, and $[R]$ denotes the image of the generator $[\RPT]\in H_2(\RPT;\Z/2)$ under $R_*$.
 Then $\tau(S;\{W_i\})$ is well-defined~\cite{schneiderman-teichner-tau} on the homotopy class $x \in \pi_2(M)$, independent of the choices of $S$ and the $\{W_i\}$, and so we write \[\tau(x) := \tau(S;\{W_i\}) \in \Z/2.\]
In \cref{sec:tau} we will give more details on the Kervaire--Milnor invariant, as well as relating it with an equivalent definition that is used in the proof of our theorems.

\subsection{Review of the stable classification via spin bordism}\label{subsection:review}

Kreck~\cite[Theorem~C]{surgeryandduality} showed  that two closed, spin $4$-manifolds with fundamental group $\pi$ are stably diffeomorphic if and only if there are choices of spin structures and identifications of the fundamental groups with $\pi$, giving rise to equal elements in the bordism group $\Omega_4^{\spin}(B\pi)$.  To understand this group of bordism classes of pairs $(M,c)$, where $M$ is a closed 4-manifold with spin structure and $c\colon M\to B\pi$ classifies the universal cover, we consider the Atiyah--Hirzebruch spectral sequence computing $\Omega_4^{\spin}(B\pi)$.  Since we will use this spectral sequence for spaces other than $B\pi$, we recall it in the necessary generality, for an arbitrary topological space $X$. It takes the form:
\[E_{p,q}^2 = H_p(X;\Omega_q^{\spin}) \, \Rightarrow \, \Omega_4^{\spin}(X).\]
The Atiyah--Hirzebruch spectral sequence gives rise to a filtration whose iterated graded quotients are
\[
\Z \cong \Omega_4^{\spin} \underbrace{\subseteq}_{E_{2,2}^\infty} F_{2,2} \underbrace{\subseteq}_{E_{3,1}^\infty} F_{3,1}   \underbrace{\subseteq}_{E_{4,0}^\infty} \Omega_4^{\spin}(X).
\]
The first isomorphism is determined by the signature divided by 16.
The signature extends to the entire group $\Omega_4^{\spin}(X)$ and so we reduce our study to $\widetilde\Omega_4^{\spin}(X)$, the kernel of the signature map.
The  spectral sequence then reduces to a shorter filtration
\[
E_{2,2}^\infty \underbrace{\subseteq}_{E_{3,1}^\infty} F   \underbrace{\subseteq}_{E_{4,0}^\infty} \widetilde\Omega_4^{\spin}(X),
\]
where the subgroup $F$ consists of bordism classes represented by signature zero $4$-manifolds $M$ with spin structure such that $c\colon M\to X^{(3)}$ lands in the 3-skeleton of $X$. Similarly, the smallest filtration term $E_{2,2}^\infty$ is represented by elements $(M,c)$ with $c\colon M\to X^{(2)}$.

Now we restrict to $X=B\pi$.
Since the $E^2_{p,q}$ term of the spectral sequence is $H_p(\pi;\Omega_q^{\spin})$, the $E^\infty_{p,q}$-terms in this case are as follows:
\begin{align*}
E_{2,2} &:= E^\infty_{2,2}=H_2(\pi;\Z/2)/\im (d_2, d_3);\\
E_{3,1} &:= E^\infty_{3,1}=H_3(\pi;\Z/2)/\im (d_2);\\
E_{4,0} &:= E^\infty_{4,0}= \ker (d_2,d_3) \subseteq  H_4(\pi;\Z).
\end{align*}
Moreover, by \cite[Theorem 3.1.3]{teichnerthesis} the $d_2$ differentials are given by the dual homomorphisms $\Sq_2 \colon H_{i+2}(\pi;\Z/2) \to H_{i}(\pi;\Z/2)$ to the Steenrod squares $\Sq^2 \colon H^i(\pi;\Z/2) \to H^{i+2}(\pi;\Z/2)$, in the case $i=3$ precomposed with the homomorphism induced by reduction modulo 2, $\red_2 \colon H_{i+2}(\pi;\Z) \to H_{i+2}(\pi;\Z/2)$.
Following \cite{teichnerthesis}, we obtain the primary invariant $\pri(M)=c_*[M] \in E_{4,0}$, the secondary invariant $\msec(M) \in E_{3,1}$ and the tertiary invariant $\ter(M) \in E_{2,2}$.

\subsection{Relating the tertiary and Kervaire--Milnor invariants}\label{subsection:relating-ter-km}

We studied the primary invariant in \cite{KPT18}, and we studied the secondary and tertiary invariants in \cite{KPT20}, building on \cite{KLPT15}.
In \cite{KPT18} and \cite{KPT20}, we gave criteria which can decide whether $(M,c) \in E_{2,2} \subseteq \widetilde\Omega_4^{\spin}(B\pi)$, that is whether $(M,c)$ is bordant to $(M',c')$ such that $c' \colon M' \to B\pi$ factors through the 2-skeleton $B\pi^{(2)} \subseteq B\pi$.
Our main theorem, stated next, says that assuming there is such a bordism, one can compute the tertiary invariant $\ter(M')$ using the Kervaire--Milnor invariant.

One can compute $\ter(M')$ via a collection of codimension two Arf invariants.  The difficulty with this in practice is that one need to first find a homotopy of our $2$-connected map to the 2-skeleton of $B\pi$, take precise inverse images of regular points, and compute with spin structures.  Since the Kervaire--Milnor invariant is well-defined on homotopy classes, and does not depend on the choice of spin structure, it represents a computational improvement.

\begin{thm}\label{thm:main-technical-thm-intro}
Let $M$ be a closed, smooth, spin $4$-manifold with fundamental group $\pi$.    Suppose that there is a map $f\colon M\to K$ to a 2-complex $K$ that is an isomorphism on fundamental groups.  Let $i \colon K \to B\pi$ be a $2$-connected map.
 \begin{enumerate}[(i)]
\item\label{it:main0} For each $\phi\in \ker (\Sq^2 \colon H^2(\pi;\Z/2) \to H^4(\pi;\Z/2))$, there exists a lift $\widehat{\phi} \in H^2(K;\Z\pi)$ of $i^*(\phi) \in H^2(K;\Z/2)$.
   \item\label{it:main1}  The element $PD(f^*(\widehat{\phi})) \in H_2(M;\Z\pi) \cong \pi_2(M)$ is $\RPT$-characteristic and has trivial self-intersection number, so that the Kervaire--Milnor invariant $\tau(PD(f^*(\widehat{\phi}))) \in \Z/2$ is well-defined.
 \item\label{it:main2} The map
 \begin{align*}
   \wh\tau_{M,f} \colon \ker \Sq^2 &\to \Z/2 \\
   \phi & \mapsto \tau_{M,f}(\wh\phi)=\tau(PD(f^*(\widehat{\phi})))
 \end{align*}
is well-defined $($i.e.\ is independent of the choice of $\widehat{\phi})$ and is a homomorphism.
\item\label{it:main3} Under the map
\[\Hom(\ker \Sq^2,\Z/2) \xrightarrow{\cong} H_2(\pi;\Z/2)/\im \Sq_2 \to H_2(\pi;\Z/2)/\im (d_2,d_3),  \]
where the first isomorphism is the inverse of the evaluation map,  $\wh\tau_{M,f}$ is sent to $\ter(M)$.
Here $d_2 = \Sq_2 \colon H_4(\pi;\Z/2) \to H_2(\pi;\Z/2)$ and $d_3 \colon H_5(\pi;\Z) \to H_2(\pi;\Z/2)/\im(d_2)$ are the differentials in the Atiyah--Hirzebruch spectral sequence as in \cref{subsection:review}.  In particular, the image of $\wh\tau_{M,f}$ under the displayed map is independent of the choices of $f$ and~$K$.
 \end{enumerate}
\end{thm}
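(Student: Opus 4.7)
The plan is to address the four parts in sequence, with part (iv) carrying the main content.

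For part (i), the key input is that $K$ is a $2$-dimensional CW complex: its cellular cochain complex $C^*(K;\Z\pi)$ is concentrated in degrees $\le 2$, so every $2$-cochain is automatically a cocycle. The coefficient surjection $\Z\pi \twoheadrightarrow \Z/2$ (augmentation followed by reduction) is surjective on each $2$-cochain group, so any representative of $i^*\phi$ lifts to a $2$-cocycle with $\Z\pi$ coefficients, giving the desired $\widehat\phi$. For part (ii), the self-intersection of $PD(f^*\widehat\phi)$ vanishes because its computation (up to the natural involution) reduces to pairing the cup-square $f^*\widehat\phi \smile f^*\widehat\phi$ with $[M]$, and this cup-square is the pullback under $f$ of a class in $H^4(K;\Z\pi\otimes_\Z \Z\pi)=0$. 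The $\RPT$-characteristic property follows from the argument of \cref{thm1}(i) applied to the mod-$2$ class $c^*\phi = \red_2(f^*\widehat\phi)$, using the hypothesis $\Sq^2\phi = 0$ and naturality under $c = i\circ f$.

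For part (iii), if $\widehat\phi'$ is a second lift then $d := \widehat\phi - \widehat\phi'$ has trivial mod-$2$ reduction, so $d$ lies in the $\Z\pi$-submodule generated by $2$ and by elements of the form $g-1$; the image $PD(f^*d) \in \pi_2(M)$ is therefore a sum of a $2$-divisible class and a class in the $\Z\pi$-submodule $(g-1)\pi_2(M)$. The additivity of $\tau$ on $\RPT$-characteristic classes with trivial self-intersection, as used in \cref{thm1}(ii) and established in \cite{schneiderman-teichner-tau}, together with the factoring through $\Z/2\otimes_{\Z\pi}(-)$ from \cref{thm1}(ii), then forces $\tau(PD(f^*d))=0$. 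The homomorphism property of $\tau_{M,f}$ follows from the same additivity, since sums and scalar multiples of lifts of $\phi,\phi'\in\ker\Sq^2$ are lifts of their sums and scalar multiples.

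Part (iv) is the main assertion and the chief obstacle. The tertiary invariant $\ter(M)$ is computed, as in \cite{KPT20} following \cite{teichnerthesis}, by a collection of codimension-two Arf invariants: after homotoping $c$ to factor through $B\pi^{(2)}$, each class $\phi \in \ker\Sq^2$ determines a closed surface $\Sigma \subseteq M$ (as a regular preimage) with $[\Sigma] = PD(c^*\phi)$ and an induced spin structure, and $\Arf(\Sigma) \in \Z/2$ realises $\langle \ter(M),\phi\rangle$ under the duality $\Hom(\ker\Sq^2,\Z/2)\cong H_2(\pi;\Z/2)/\im\Sq_2$. The plan is to identify $\Arf(\Sigma)$ with the Kervaire--Milnor invariant $\tau(PD\circ f^*(\widehat\phi))$. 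The key observation is that a generic immersed sphere representative $S$ of $PD(f^*\widehat\phi)$ and the embedded surface $\Sigma$ represent the same $\Z/2$-homology class in $M$; the hard part will be to show that their $\Z/2$-valued invariants coincide. I expect to realise both as evaluations of the same quadratic refinement of the mod-$2$ intersection form on the subgroup of $\RPT$-characteristic classes---via the Schneiderman--Teichner reformulation of $\tau$ through secondary Whitney intersections, together with the Brown--Arf description of the codimension-two invariant---and to bridge $\Sigma$ and $S$ by an explicit geometric cobordism (for example, a sequence of surgeries along embedded circles) along which both invariants are preserved. Controlling the spin-structure choices and Whitney-disc framing contributions along this cobordism is where the bulk of the technical work will lie; once these contributions are shown to match, the identification with $\ter(M)$ modulo $\im(d_2,d_3)$ is a naturality statement for the AHSS filtration from \cref{subsection:review}.
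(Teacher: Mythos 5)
Your treatment of parts (i) and (ii) is essentially sound and runs close to the paper: part (i) is a Bockstein/dimension argument, and for part (ii) the self-intersection vanishes because $K$ is $2$-dimensional (the paper phrases this as $f_*[M]=0$, you via $H^4(K;-)=0$; both work once you feed $\lambda(x,x)=0$ into \cref{lem:lambda-zero-mu-zero}), while $\RPT$-characteristicity uses $\Sq^2\phi=0$ exactly as in \cref{lem:char}.

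The serious gap is in your part (iii). You derive both well-definedness and the homomorphism property from ``additivity of $\tau$ \dots established in \cite{schneiderman-teichner-tau}.'' No such general additivity is available: the paper explicitly poses as an open question whether $\tau_{M,f}$ on $\ker\Sq\subseteq H^2(\pi;\Z\pi)$ is a homomorphism, which would be immediate if $\tau$ were additive. What \cite{KLPT15} (Lemma 8.3, invoked for \cref{thm1}\eqref{thm1:item2}) gives is the much narrower statement that $\tau(y)=\tau(y+\sum\kappa_i z_i)$ when $\kappa_i\in\ker(\red_2\colon\Z\pi\to\Z/2)$ and $\mu(y)=\mu(z_i)=\lambda(y,z_i)=0$. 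That can plausibly handle well-definedness over the choice of $\widehat\phi$ (though you would also need to justify that the kernel of $H^2(K;\Z\pi)\to H^2(K;\Z/2)$ equals $\ker(\red_2)\cdot H^2(K;\Z\pi)$, which is not the same as the statement that $\ker(\red_2)\subseteq\Z\pi$ is generated by $2$ and $g-1$), but it does not give $\tau(x+y)=\tau(x)+\tau(y)$. In the paper, parts (iii) and (iv) are proved simultaneously: $\tau(PD\circ f^*\widehat\phi)$ is shown to equal the codimension-two Arf invariant of a regular-preimage surface, and that Arf invariant visibly depends only on $i^*\phi$ and is additive in $\phi$ by \cref{lem:arf-homo-plus-computes-ter}. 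The homomorphism property is therefore an \emph{output} of the comparison with $\ter(M)$, not an input.

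Your part (iv) is a research plan rather than a proof. You correctly identify the target (match $\tau$ against the codimension-two Arf invariant via quadratic refinements) but explicitly defer the technical core (``controlling the spin-structure choices and Whitney-disc framing contributions \dots is where the bulk of the technical work will lie'') without carrying it out. Your proposed mechanism (a geometric cobordism or sequence of surgeries between the preimage surface $\Sigma$ and an immersed sphere $S$) is not what the paper does; the paper instead extends $\tau$ to $\pi_1$-trivial immersed surfaces (\cref{section:tau-surfaces}), shows the regular preimage $F$ of a point under $\widehat\phi\circ\widetilde f$ represents $PD\circ f^*\widehat\phi$ (\cref{lem:inverse-image-representative}), introduces $f$-caps, and proves two concrete lemmas: the relative Euler number of an $f$-cap equals the $\Omega_1^{\spin}$ class of its boundary (\cref{lem:spin-bordism-equals-Euler-no}), and an $f$-cap meets $F$ in an even number of interior points (\cref{lem:int-points-even}). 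It is precisely these two lemmas that identify the quadratic refinement $\varpi$ used for $\tau(F)$ with the refinement $\Upsilon$ used for $\Arf(F)$, and without something playing their role your plan does not close.
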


\begin{proof}[Proof of \cref{thm:geom-2-dim-gps} assuming \cref{thm:main-technical-thm-intro}]
For $\pi$ a $2$-dimensional group, take $K=B\pi$ and $f \colon M \to B\pi$ as in \cref{thm:geom-2-dim-gps}.
Since $H^4(\pi;\Z/2)=0$, we have that $\Sq^2=0$, and thus $\wh\tau_{M,f}$ is a homomorphism $H^2(\pi;\Z/2)\to \Z/2$ by \cref{thm:main-technical-thm-intro} \eqref{it:main2}.
For $x\in H^2(\pi;\Z\pi)$, we have $\tau_{M,f}(x)=\wh\tau_{M,f}([x])$, where $[x]\in H^2(\pi;\Z/2)$ is the image of $x$ under mod 2 augmentation. Hence $\tau_{M,f}\colon H^2(\pi;\Z\pi)\to \Z/2$ is composition of two homomorphisms, so is a homomorphism. This shows \cref{thm:geom-2-dim-gps} \eqref{it:thm1.2.1}.

Since $E_{3,1}=E_{4,0}=0$, $M$ and $N$ are stably diffeomorphic if and only if their signatures are equal and $\ter(M)=\ter(N)\in H_2(\pi;\Z/2)$ by \cite[Theorem~C]{surgeryandduality}.
Again  $B\pi$ is 2-dimensional, $d_2= \Sq_2 =0$ and $d_3=0$, and hence the displayed map in  \cref{thm:main-technical-thm-intro} \eqref{it:main3} is an isomorphism $\Hom(H^2(\pi;\Z/2),\Z/2) \to H_2(\pi;\Z/2)$ that sends $\wh\tau_{M,f}$ to $\ter(M)$, and similarly for $N$. Thus $\ter(M)=\ter(N)$ if and only if $\wh\tau_{M,f}=\wh\tau_{N,g}$. Since by definition, $\wh\tau_{M,f}([x])=\tau_{M,f}(x)$ and $H^2(\pi;\Z\pi)\to H^2(\pi;\Z/2)$ is surjective, it follows that $\tau_{M,f}=\tau_{N,g}$ if and only if $\wh\tau_{M,f}=\wh\tau_{N,g}$. This completes the proof of \cref{thm:geom-2-dim-gps}.
%
%
\end{proof}

\subsubsection*{Organisation of the paper}

In \cref{sec:tau} we provide more details on the Kervaire--Milnor invariant of immersed spheres, and give an alternative equivalent description in terms of $\pi_1$-trivial immersed surfaces.
In \cref{section:Arf-and-AHSS} we explain how the Arf invariant arises in the Atiyah--Hirzebruch spectral sequence computation of spin bordism.
In \cref{sec:tertiary-obstruction} we show \cref{thm:main-technical-thm-intro} by comparing the Kervaire--Milnor invariant with the Arf invariant.

\subsubsection*{Acknowledgments}

We are grateful to the Max Planck Institute for Mathematics in Bonn, and to Rob Schneiderman and Arunima Ray for numerous useful conversations on the Kervaire--Milnor invariant. We also thank an anonymous referee for a detailed and thoughtful report.  DK was partially funded by the Deutsche Forschungsgemeinschaft under Germany's Excellence Strategy -- GZ 2047/1, Projekt-ID 390685813.  MP was partially supported by EPSRC New Investigator grant EP/T028335/2 and EPSRC New Horizons grant EP/V04821X/2.


\section{The Kervaire--Milnor invariant}\label{sec:tau}
Let $M$ be a smooth, closed, oriented, spin 4-manifold.
In this section, a \emph{surface in $M$} is an abstract surface $\Sigma$ together with a generic immersion $F \colon \Sigma \to M$, meaning the map is an immersion, and all intersections and self-intersections are transverse double points. In particular there are no triple points. Moreover the boundary of $\Sigma$, if nonempty, is assumed to be embedded in $M$. We will typically denote the data $(\Sigma,F)$ just by $F$ for brevity.


Let $S$ be a generically immersed sphere in $M$ with $\lambda_M([S],[S])=0$.
Generalising  Freedman--Kirby~\cite[p.~93]{Kirby-Freedman}, Guillou--Marin~\cite{Guillou-Marin}, Matsumoto~\cite{Matsumoto}, and Freedman--Quinn~\cite[Definition~10.8]{Freedman-Quinn}, work of Schneiderman and the third-named author~\cite{schneiderman-teichner-tau} defined an invariant $\wt{\tau}(S)$ with values in a quotient of $\Z[\pi\times\pi]$.
Which quotients of $\Z[\pi \times \pi]$ one can take in order to get an invariant of the homotopy class of $\Sigma$ depends on the intersection numbers of $\Sigma$ with other immersed surfaces in~$M$.

Assuming that $S$ is $\RPT$-characteristic, the image of $\wt{\tau}(S)$  under the augmentation and reduction modulo two map $\Z[\pi \times \pi] \to \Z/2$ is a well-defined invariant of the homotopy class of $\Sigma$.
Following the nomenclature of Freedman--Quinn, we call this image the Kervaire--Milnor invariant $\tau(S) \in \Z/2$ of $[S] \in \pi_2(M)$.

We will define $\tau(S)$ carefully in \cref{section:tau-spheres}. Then in \cref{section:tau-surfaces} we will extend the definition to $\pi_1$-trivial generically immersed closed, oriented surfaces in $M$.

\subsection{The \texorpdfstring{$\tau$}{tau} invariant for generically immersed spheres}\label{section:tau-spheres}

As before, let $M$ be a smooth, closed, oriented, spin 4-manifold, and fix an identification $\pi_1(M) \cong \pi$.

\begin{defi}[Self-intersection number {\cite[Chapter~5]{Wall}, \cite[Section~1.7]{Freedman-Quinn}}]
\label{defn:mu}
	Let $x \in \pi_2(M)$.  Since $M$ is spin, we can represent $x$ by a generically immersed sphere $S' \colon S^2 \to M$ whose normal bundle has even Euler number.  Add cusp homotopies (see e.g.\ \cite[Chapter~XII, p.~72]{Kirby-4-manifold-book} for the local model) in a small open set to make the Euler number of the normal bundle zero, and  call the resulting sphere $S$.  It can be checked in the local model that a cusp homotopy changes the Euler number of the normal bundle by $\pm 2$.

 Now count the self intersections of $S$ with sign and group elements.  The attribution of signs uses the orientation of~$M$.  The group element is the image in~$\pi_1(M)$ of a double point loop associated to the self-intersection point, with some choice of orientation of the double point loop.  This count gives rise to an element
 \[\mu(x) \in \Z\pi/\{g \sim g^{-1}\mid g\in \pi\}.\]
 This \emph{self-intersection number} is valued in a quotient abelian group of the $\Z\pi$-module $\Z\pi$. The indeterminacy arises because there is no canonical way to decide whether to associate $g$ or $g^{-1}$ to a given  double point of $\Sigma$.  The number $\mu(x)$ is an invariant of the homotopy class of $x$.
\end{defi}

\begin{remark}
The normalisation of $\mu(x)$ at $1\in \pi$, obtained by choosing the regular homotopy class whose normal bundle has trivial Euler number, implies that $\lambda_M(x,x)=\mu(x)+\ol{\mu(x)}\in \Z\pi$, where $\ol{(-)}\colon \Z\pi\to \Z\pi$ is determined by $g\mapsto g^{-1}$ and in order to see that the right hand side is well-defined in $\Z\pi$ we use that $\gamma \mapsto \gamma +\ol{\gamma}$ factors through $\Z\pi/\{g\sim g^{-1}\}$.
This normalisation works if the Euler number $e(\nu S)$ is even for every $S$, or equivalently if $w_2(\widetilde M)=0$, i.e.\ the universal cover of $M$ is spin.

On the other hand, using cusp homotopies it is always possible to change $S$ so that
the self-intersection number of $S$ at $1$ is
trivial, even if the universal cover of $M$ is not spin. This gives an element  $\mu'(x)\in \Z\pi/\{g\sim g^{-1}\}$ which
again only depends on the homotopy class of $x$. Using this convention, $\mu'(x)$ is an obstruction to representing $x$ by an embedded sphere.  In the setting of this paper, our 4-manifolds are spin and we usually assume that $\lambda_{M}(x,x)=0$. In that case, the two conventions agree and $\mu(x)=\mu'(x)=0$.
\end{remark}

The following lemma is rather useful, since that it tells us that it is enough to consider the equivariant intersection pairing in order to find spheres with vanishing self-intersection number.

\begin{lemma}\label{lem:lambda-zero-mu-zero}
For a closed, oriented, spin 4-manifold $M$, if $\lambda_M(x,x)=0$ for $x\in \pi_2(M)$, then $\mu(x)=0$.
\end{lemma}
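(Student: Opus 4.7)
The plan is to reduce the lemma to a purely algebraic statement via the identity $\lambda_M(x,x) = \mu(x) + \overline{\mu(x)}$ recorded in the Remark preceding the lemma. First I would choose a generic immersion $\Sigma \colon S^2 \imra M$ representing $x$; since $M$ is spin the Euler number of $\nu\Sigma$ is automatically even, and cusp homotopies let me arrange $e(\nu\Sigma) = 0$. Further cusps adjust the self-intersection count at $1 \in \pi$ by $\pm 1$, so I can also normalise the coefficient at $1$ of the count to be zero. Counting the remaining self-intersections with signs and double-point loops then provides a preferred lift $\widetilde\mu(x) \in \Z\pi$ of the class $\mu(x) \in \Z\pi/\{g \sim g^{-1}\}$, for which the identity $\lambda_M(x,x) = \widetilde\mu(x) + \overline{\widetilde\mu(x)}$ holds in $\Z\pi$.

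Given the hypothesis $\lambda_M(x,x) = 0$, this identity becomes $\widetilde\mu(x) = -\overline{\widetilde\mu(x)}$ in $\Z\pi$. I would then establish the elementary algebraic claim that the kernel of the quotient map $\Z\pi \to \Z\pi/\langle g - g^{-1} : g \in \pi\rangle$ is precisely the subgroup of anti-Hermitian elements $\{w \in \Z\pi : w + \overline w = 0\}$. The inclusion of $\langle g - g^{-1}\rangle$ into the anti-Hermitian elements is immediate from $(g-g^{-1}) + \overline{(g-g^{-1})} = 0$. Conversely, if $w = \sum_g n_g g$ satisfies $w + \overline w = 0$, then $n_g = -n_{g^{-1}}$ for all $g$, forcing $n_g = 0$ whenever $g = g^{-1}$. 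Grouping the remaining terms into orbits $\{g, g^{-1}\}$ with $g \neq g^{-1}$ and summing $n_g(g - g^{-1})$ over a choice of representative from each orbit recovers $w$, so $w \in \langle g-g^{-1}\rangle$. Applying this to $w = \widetilde\mu(x)$ yields $\mu(x) = 0$ in $\Z\pi/\{g \sim g^{-1}\}$.

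The main (though relatively minor) obstacle is justifying the identity $\lambda_M(x,x) = \widetilde\mu(x) + \overline{\widetilde\mu(x)}$ in $\Z\pi$. This is a standard computation going back to Wall: a small perturbation of $\Sigma$ to a parallel copy $\Sigma'$ meets $\Sigma$ transversely in $e(\nu\Sigma)$ points near the zero section (all at the trivial group element), together with a pair of points with group elements $g$ and $g^{-1}$ for each self-intersection of $\Sigma$ at $g$. The spin assumption together with the cusp normalisation arranges that both the Euler number and the contribution at $1 \in \pi$ vanish, and careful tracking of signs shows that the remaining paired contributions assemble exactly into $\widetilde\mu(x) + \overline{\widetilde\mu(x)}$. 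Once this identity is on the table, the lemma is immediate from the algebra above.
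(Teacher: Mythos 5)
Your argument is essentially the paper's own proof: start from the identity $\lambda_M(x,x)=\mu(x)+\overline{\mu(x)}$ (with $e(\nu\Sigma)$ normalised to zero), and then observe that an anti-Hermitian element of $\Z\pi$ dies in the quotient $\Z\pi/\{g\sim g^{-1}\}$ by pairing off $g$ with $g^{-1}$ and noting $n_g=0$ when $g=g^{-1}$. One small wrinkle: you cannot independently normalise both $e(\nu\Sigma)=0$ \emph{and} the coefficient at $1$ by cusps, since each cusp changes both simultaneously; but this extra step is unnecessary anyway, as once $e(\nu\Sigma)=0$ the coefficient $n_1$ satisfies $2n_1=0$ from $\lambda_M(x,x)=0$, so it vanishes automatically and the rest of your algebra goes through unchanged.
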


\begin{proof}
Using a representative as in \cref{defn:mu}, we can assume that $0= \lambda_M(x,x) = \mu(x) + \ol{\mu(x)}$.
	Suppose that $\sum_g n_g g \in \Z\pi$ is a lift of $\mu(x)$.  Then $\mu(x) + \ol{\mu(x)}=0$ implies that $n_g + n_{g^{-1}} = 0$ for every $g \in \pi$. If $g= g^{-1}$ then we immediately see $n_g=0$.  For the remaining group elements, in the value group of $\mu$ we have $g \sim g^{-1}$, so $n_g g +n_{g^{-1}} g^{-1} = (n_g + n_{g^{-1}})g = 0\cdot g =0$.  Sum over a set of representatives for the subsets $\{g,g^{-1}\}$ with $g \neq g^{-1}$, to obtain $\mu(x)=0$.
\end{proof}


\begin{definition}\label{defn:spherically-characteristic}
Let $\lambda_2 \colon H_2(M;\Z/2) \times H_2(M;\Z/2) \to \Z/2$ be the $\Z/2$-valued intersection pairing.	
\begin{enumerate}
		\item
		An element $\alpha\in\pi_2(M)$ is called  \emph{$S^2$-characteristic} if $\red_2(\lambda_M(\alpha,\beta)) = 0 \in \Z/2$ for all $\beta \in \pi_2(M)$.  Let $\SC\subseteq \pi_2(M)$ denote the subset of $S^2$-characteristic elements $\alpha$ with $\mu(\alpha)=0$.
		\item An element $\alpha\in\pi_2(M)$ is called  \emph{$\RPT$-characteristic} if $\lambda_2(\red_2(\alpha),[R])) = 0 \in \Z/2$ for every map $R \colon \RPT \to M$.  Let $\RC\subseteq \pi_2(M)$ denote the subset of $\RPT$-characteristic elements $\alpha$ with $\mu(\alpha)=0$.
	\end{enumerate}
\end{definition}

\begin{lemma}\label{lem:R-char-implies-s-char}
	An $\RPT$-characteristic sphere $\alpha \in \pi_2(M)$ is $S^2$-characteristic.
	Moreover if $\pi_1(M)$ has no elements of order two, then $\alpha$ is $S^2$-characteristic if and only if it is $\RPT$-characteristic.
\end{lemma}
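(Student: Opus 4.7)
The plan is to prove both implications by comparing which mod $2$ classes in $H_2(M;\Z/2)$ arise from maps $S^2 \to M$ versus from maps $\RPT \to M$, using the collapse map $q \colon \RPT \to \RPT/\RP^1 \cong S^2$ as the bridge. First I would record the naturality identity
\[
\red_2(\lambda_M(\alpha, \beta)) = \lambda_2(\red_2(\alpha), \red_2(\beta)),
\]
which follows directly from the fact that augmentation $\Z\pi \to \Z$ converts $\lambda_M$ into the ordinary $\Z$-valued intersection pairing, whose mod $2$ reduction is $\lambda_2$ on the image of $\pi_2(M)$ in $H_2(M;\Z/2)$. Via this identity, both characteristic conditions can be rephrased as the vanishing of $\lambda_2(\red_2(\alpha), -)$ on an appropriate subset of $H_2(M;\Z/2)$, and the lemma becomes a comparison of those subsets.

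For the first implication, suppose $\alpha$ is $\RPT$-characteristic and let $\beta \in \pi_2(M)$ be represented by $S \colon S^2 \to M$. I would set $R := S \circ q \colon \RPT \to M$. Since $q$ is a homeomorphism on the interior of the top cell, $q_*$ sends the mod $2$ fundamental class of $\RPT$ to that of $S^2$, so $[R] = S_* q_*([\RPT]_{\Z/2}) = \red_2(\beta)$. Applying the $\RPT$-characteristic hypothesis to $R$ and combining with the naturality identity yields $\red_2(\lambda_M(\alpha, \beta)) = 0$, which is the $S^2$-characteristic condition for the pair $(\alpha, \beta)$.

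For the converse, assume $\pi_1(M)$ has no elements of order two and that $\alpha$ is $S^2$-characteristic. Given an arbitrary $R \colon \RPT \to M$, the induced homomorphism $R_* \colon \pi_1(\RPT) \cong \Z/2 \to \pi_1(M)$ is forced to be trivial, so $R|_{\RP^1}$ is null-homotopic. By the homotopy extension property for the CW pair $(\RPT, \RP^1)$, I may homotope $R$ so that $R|_{\RP^1}$ is constant, producing a factorisation $R \simeq S \circ q$ for some $S \colon S^2 \to M$. As in the previous step this gives $[R] = \red_2([S])$, and the $S^2$-characteristic hypothesis applied to $[S]$ yields $\lambda_2(\red_2(\alpha), [R]) = 0$. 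The argument is essentially routine; the main care points are the naturality identity and the cellular verification that $q$ has degree one modulo two, both of which are immediate.
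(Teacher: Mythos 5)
Your proof is correct and follows the same route as the paper: both directions hinge on the collapse map $q \colon \RPT \to \RPT/\RP^1 \cong S^2$, using it to push a sphere forward to an $\RPT$-map for the first implication, and using the vanishing of $R_* \colon \pi_1(\RPT) \to \pi_1(M)$ to factor an arbitrary $R \colon \RPT \to M$ through $q$ for the converse. The paper phrases the comparison geometrically (``same intersection behaviour''), while you make the algebraic identity $\red_2(\lambda_M(\alpha,\beta)) = \lambda_2(\red_2\alpha,\red_2\beta)$ and the degree-one mod $2$ property of $q$ explicit, but the content is identical.
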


\begin{proof}
	A generic immersion $f \colon S^2 \imra M$ determines a map $\RPT \to \RPT/\mathbb{RP}^1 = S^2 \xrightarrow{f} M$, which can be perturbed to a generic immersion of $\RPT$ into $M$ with the same intersection behaviour with $\alpha$ as the original $S^2$. Thus $\RPT$-characteristic implies $S^2$-characteristic.
	
	On the other hand, if no element of $\pi_1(M)$ has order 2, then for every generic immersion $R$ of $\RPT$, the induced map $\pi_1(\RPT) \to \pi_1(M)$ is the zero map. Therefore $R$ is homotopic to a map that factors as $\RPT \to \RPT/\mathbb{RP}^1 = S^2 \xrightarrow{f} M$, and intersections with $f(S^2)$ agree with intersections with $R$. It follows that $S^2$-characteristic implies $\RPT$-characteristic.
\end{proof}

Let $S\colon S^2 \looparrowright M$ be a generically immersed 2-sphere with vanishing self-intersection number $\mu(S)=0$.  Then the self-intersection points of $S$ can be paired up so that each pair consists of two points having oppositely signed but equal group elements associated to their double point loops.  Therefore, one can choose a Whitney disc $W_i$ for each pair of self-intersections, and arrange that all the boundary arcs are disjoint.  The normal bundle to the disc $W_i$ has a unique framing, and the Whitney framing of the normal bundle of $W_i$ restricted to $\partial W_i$ differs from the restriction of the disc framing by an integer $n_i \in \Z$.  (The Whitney framing is determined by a section of the normal bundle $\nu_{W_i}|_{\partial W_i}$ that lies in $DS(TS^2) \cap \nu_{W_i}$ along one boundary arc of $\partial W_i$ and lies in $\nu_{S^2} \cap \nu_{W_i}$ along the other boundary arc.)

\begin{definition}
If $S$ is $\RPT$-characteristic, then
\[
\tau(S) := \sum_i |\mathring{W}_i \pitchfork S| + n_i \mod{2}.
\]
\end{definition}

\begin{lemma}
  The expression $\tau(S)$ is independent of the choice of pairings of double points, sheet choices and Whitney arcs, and Whitney discs.  Moreover, $\tau(S)$ only depends on the regular homotopy class of the generic immersion.
\end{lemma}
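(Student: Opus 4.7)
The plan is to reduce each form of independence to one of two elementary calculations. For a change of Whitney discs $\{W_i\} \rightsquigarrow \{W_i'\}$ with identical boundary (pairings, arcs, and sheets fixed), set $A_i := W_i \cup_\partial (-W_i')$, which is a generically immersed $2$-sphere in $M$. The change in $\sum_i |\mathring W_i \pitchfork S|$ modulo $2$ equals $\sum_i \lambda_2(\red_2[A_i], \red_2[S])$, which vanishes because $S$ is $\RPT$-characteristic and hence $S^2$-characteristic by \cref{lem:R-char-implies-s-char}. The change in $\sum_i n_i$ is $\sum_i e(\nu_{A_i}) \pmod 2$; since $M$ is spin and $A_i$ is a sphere, $w_2(\nu_{A_i}) = 0$ and this Euler number is even. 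Hence $\tau(S)$ is unchanged.

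For a change of sheet choice or Whitney arc with fixed pairing, a local model shows that the effect on the Whitney disc is the same as that of surgering in an immersed disc whose boundary doubles back along a loop in $S$; the relevant correction class is represented by a generically immersed $\RPT$ in $M$, and the change in intersection count is the mod-$2$ intersection of $\red_2[S]$ with this $\RPT$-class. This is the step that strictly requires the full $\RPT$-characteristic hypothesis rather than merely $S^2$-characteristic. The accompanying framing change is handled using the spin structure, following the careful analysis of \cite{schneiderman-teichner-tau}. A change of pairing can then be reduced to the previous cases by interpolation: each transposition of partners is realised by modifying two discs inside an abstract $4$-ball neighborhood, fitting into the disc-change analysis above.

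For regular homotopy invariance, appeal to the Smale--Hirsch classification to write any regular homotopy as a finite composition of ambient isotopies, finger moves, Whitney moves, and cusp homotopies in cancelling pairs (to preserve $e(\nu_S)=0$). Ambient isotopies carry Whitney data along. A finger move creates a cancelling pair of double points which can be paired by a small, locally supported Whitney disc with no intersections with $S$ and zero framing defect; invariance under the choice of Whitney system, already established, absorbs the change. Whitney moves are inverse to finger moves. A cusp pair yields two self-intersections at $1 \in \pi$ of opposite sign, again paired by a small local disc with trivial contributions. The main obstacle is the arc/sheet change step: one must pin down precisely which local $\RPT$-class arises as the correction, and check compatibility of the framing accounting with the spin hypothesis. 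This is the technical heart of \cite{schneiderman-teichner-tau}, and is the point at which our hypotheses are used in full strength; the remainder of the argument is bookkeeping.
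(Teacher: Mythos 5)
Both you and the paper ultimately rest the proof on \cite[Theorem~1]{schneiderman-teichner-tau}, but the content of the two arguments is quite different. The paper's proof is deliberately \emph{not} a re-derivation of independence: it is a short translation guide. It observes that the Schneiderman--Teichner invariant lives in a quotient of $\Z[\pi\times\pi]$ by a list of relations, that under augmentation followed by reduction mod $2$ all but the last relation become vacuous, and that the last relation is killed precisely by the $\RPT$-characteristic hypothesis. It then addresses the one genuine discrepancy of conventions --- that \cite{schneiderman-teichner-tau} requires framed Whitney discs while the present definition does not --- by recalling that boundary twisting trades $n_i$ units of framing defect for $|n_i|$ new intersections with $S$, so the two definitions agree. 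You do not address either of these translation points; instead you sketch the geometric mechanism by which $\tau$ is well-defined. That sketch is in the spirit of the source, but it does not in itself prove the lemma (you explicitly defer the arc/sheet-change step, which is where Stong's observation and the $\RPT$-characteristic condition enter, to \cite{schneiderman-teichner-tau}).

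Two local issues in your sketch are worth flagging. First, in the disc-change step you split the variation of $\tau(S)$ into an intersection term $\lambda_2(\red_2[A_i],\red_2[S])$ and a separate framing term $e(\nu_{A_i})$. These are not independent: for $A_i = W_i\cup_\partial(-W_i')$ to be a \emph{generically immersed} sphere one must push its boundary collar off $S$, and the choice of pushoff is exactly what absorbs the framing coefficients $n_i, n_i'$ into the single quantity $\lambda_2(\red_2[A_i],\red_2[S])$. Keeping them both gives the right conclusion modulo $2$ because both summands vanish, but the bookkeeping as stated double-counts. Second, in the regular-homotopy step you include ``cusp homotopies in cancelling pairs.'' A cusp is not a regular homotopy (it changes the normal Euler number by $\pm 2$ and passes through a non-immersive stage); a generic regular homotopy decomposes into ambient isotopies, finger moves, and Whitney moves only. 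Cusps do enter the discussion, but only in fixing the representative of the \emph{homotopy} class with $e(\nu_S)=0$ before the definition of $\tau$ is invoked, not in the regular-homotopy invariance argument itself.
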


\begin{proof}
The lemma essentially follows from \cite[Theorem~1]{schneiderman-teichner-tau}.
A key observation here is due to Stong~\cite{Stong}.  We make a couple of remarks on how to translate the version in \cite{schneiderman-teichner-tau} to the current version.  First note that in the formulation of \cite{schneiderman-teichner-tau}, as mentioned above the intersections were decorated with a pair of fundamental group elements, to give an invariant in a quotient of $\Z[\pi \times \pi]$ by certain relations. Since we consider the augmentation followed by the reduction modulo two, all but the last relation given in \cite[Theorem~1]{schneiderman-teichner-tau} are vacuous.  In addition their last relation is irrelevant because we consider $\RPT$-characteristic elements.  Secondly, the formulation of Schneidermann--Teichner requires that Whitney discs be framed, whereas we do not, and include the framing coefficient as part of the definition.  However by boundary twisting \cite[Section~1.3]{Freedman-Quinn}, one can alter $n_i$ to be zero at the cost of introducing $|n_i|$ intersection points in $\mathring{W}_i \pitchfork S$, and so the two ways of computing $\tau(S)$ agree.
\end{proof}

For every element of $\pi_2(M)$, we fix a regular homotopy class within the homotopy class by the requirement that the Euler number of the normal bundle be zero. That fixing the Euler number determines a regular homotopy class of immersions is a consequence of Smale--Hirsch immersion theory; for  details specific to surfaces in 4-manifolds, see for example \cite[Theorem~2.32~(3)]{KPRT}.   Thus $\tau$ becomes well-defined on $\RC \subseteq \pi_2(M)$. So we have defined a map \[\tau \colon \RC \to \Z/2.\]

\begin{remark}
	If $S$ is not $S^2$-characteristic then $\tau(S)$ is not well-defined, since adding a sphere that intersects $S$ in an odd number of points to one of the Whitney discs would change the sum in the definition of $\tau$ by one.
	
	If $S$ is $S^2$-characteristic but not $\RPT$-characteristic, then $\tau(S)$ is also not well-defined, as observed by Stong~\cite{Stong}. In this case,
  a change in choice of Whitney arcs, can also change~$\tau(S)$.
\end{remark}

\subsection{The \texorpdfstring{$\tau$}{tau} invariant for \texorpdfstring{$\pi_1$}{pi1}-trivial generically immersed surfaces}\label{section:tau-surfaces}

In this subsection we introduce the following extension of the $\tau$ invariant, which is defined on $\RPT$-characteristic, $\pi_1$-trivial, generically immersed surfaces  instead of on $\RPT$-characteristic generically immersed spheres.
We will not need the full version of this invariant, only the embedded version.  But we anticipate that the full version might be useful in the future, so we include it here, as it requires little extra work.

As before let $M$ be a smooth, closed, oriented, spin 4-manifold together with an identification $\pi_1(M) \cong \pi$.
We call a generically immersed, closed, oriented surface $F \colon \Sigma \looparrowright M$ a \emph{$\pi_1$-trivial surface} if $F_* \colon \pi_1(\Sigma) \to \pi_1(M)$ is the trivial map.

\begin{definition}
A $\pi_1$-trivial generically immersed surface $F\colon \Sigma\looparrowright M$ is said to be \emph{$\RPT$-characteristic} if it intersects every generically immersed $\RPT$ in general position in an even number of points i.e.\ if the element of $\pi_2(M)$ determined by $F$ via the Hurewicz isomorphism $H_2(M;\Z\pi) \cong \pi_2(M)$ is $\RPT$-characteristic.
\end{definition}

A $\pi_1$-trivial $\RPT$-characteristic generically immersed surface $F$ has a self-intersection number $\mu(F) \in \Z\pi/\{g \sim g^{-1} \mid g \in \pi\}$ defined as follows.
Change $F$ by cusp homotopies such that
its normal bundle is trivial; this is possible since $F$ is $S^2$-characteristic by \cref{lem:R-char-implies-s-char}.
Now count self-intersection points of the generically immersed surface with group elements and sign.  We use $\pi_1$-triviality to see that the associated group elements do not depend on the choice of double point loop on $F$ used to compute it.
\medskip

Let $F \colon \Sigma\looparrowright M$ be a generically immersed $\pi_1$-trivial surface with $\mu(F)=0$, and let $\alpha$ be an embedded circle in $\Sigma$ such that the restriction of $F$ to $\alpha$ is an embedding. The circle $F(\alpha)$ bounds a disc $C$ in $M$, since $F$ is $\pi_1$-trivial. The normal direction of $\alpha$ in $\Sigma$, pushed forward into $M$, gives a section of the normal bundle of $C$ at the boundary $F(\alpha)$. Therefore, the relative Euler number $e(C)$ of the normal bundle of $C$ is a well-defined integer. We define
\[\varpi(\alpha):=|\mathring{C} \pitchfork F|+e(C) \mod{2},\]
where $|\mathring{C}\pitchfork F|$ is the number of transverse intersections between the interior of $C$ and $F(\Sigma)$.

\begin{lemma}\label{lemma:independence-of-C}
  If $F$ is $S^2$-characteristic, then the count $\varpi(\alpha)$ does not depend on the choice of $C$.
\end{lemma}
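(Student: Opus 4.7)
The plan is to view the two bounding discs as the two hemispheres of an immersed sphere $S$, and to show the difference $\varpi(\alpha,C)-\varpi(\alpha,C')$ vanishes mod $2$ by simultaneously using that $F$ is $S^2$-characteristic and that $M$ is spin. The two hypotheses kill the two summands of $\varpi$ separately.

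First I would build $S$. After a small generic isotopy near $\alpha$, one may arrange that the inward tangent directions of $C$ and $C'$ along $\alpha$ are opposite, so that $S:=C\cup_\alpha(-C')$ is a generically immersed $2$-sphere in $M$. The boundary sections used to define $e(C)$ and $e(C')$ are both the normal $v$ to $\alpha$ inside $F$, so they glue to a nowhere-zero section of $\nu_S|_\alpha$, and $S$ determines a class $[S]\in\pi_2(M)$. Writing out the definitions,
\[
\varpi(\alpha,C)+\varpi(\alpha,C')\equiv\bigl(\#(\mathring C\pitchfork F)+\#(\mathring{C'}\pitchfork F)\bigr)+\bigl(e(C)+e(C')\bigr)\pmod 2,
\]
so it suffices to show each parenthesised term is even.

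For the intersection term, I would push $S$ off $F$ in a small tubular neighbourhood of $\alpha$ along a vector field normal to $F$ in $M$. This destroys the non-transverse $\alpha$-locus of $S\cap F$ without altering the transverse intersections in $\mathring C\pitchfork F$ or $\mathring{C'}\pitchfork F$, so
\[
\lambda_2(\red_2[S],\red_2[F])\equiv \#(\mathring C\pitchfork F)+\#(\mathring{C'}\pitchfork F)\pmod 2,
\]
and the left side is zero because $F$ is $S^2$-characteristic. For the Euler term, I would extend the boundary section $v$ of $\nu_S|_\alpha$ to a generic section of $\nu_S$ over the whole $S^2$; the zeros split between the two hemispheres as $e(C)$ and $\pm e(C')$, giving $e(\nu_S)\equiv e(C)+e(C')\pmod 2$. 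Since $M$ is spin and $S^2$ has trivial tangent $w_2$, the Whitney sum formula applied to $f^*TM=TS^2\oplus\nu_S$ yields $w_2(\nu_S)=0$, and for an oriented rank-two bundle over $S^2$ this is the same as $e(\nu_S)\equiv 0\pmod 2$.

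The main obstacle is the bookkeeping in the construction of $S$: one must check that the inward tangents and boundary framings can simultaneously be made to match, so that $S$ is a genuine generically immersed sphere and $v$ truly extends across $\alpha$ as a section of $\nu_S$. Once this is in place, everything else is a mod $2$ count, and the sign ambiguities coming from orientation reversal on the second hemisphere become irrelevant.
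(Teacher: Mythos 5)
Your overall strategy coincides with the paper's at the top level: form the sphere $S = C \cup_\alpha C'$ and use $S^2$-characteristicness. But from there you diverge. The paper first performs boundary twists to arrange $e(C)=e(C')=0$, observes that this leaves $\varpi_C(\alpha)$ and $\varpi_{C'}(\alpha)$ unchanged (since a boundary twist trades one unit of $e$ for one intersection with $F$), and then reads off $0=\red_2\lambda(F,C\cup_\alpha C')=\varpi_C(\alpha)+\varpi_{C'}(\alpha)$ in one stroke. You instead try to keep $C,C'$ arbitrary and kill the intersection part $\#(\mathring C\pitchfork F)+\#(\mathring{C'}\pitchfork F)$ and the Euler part $e(C)+e(C')$ by two independent arguments, the second of which invokes $w_2(\nu_S)=0$ from the spin hypothesis. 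Your Euler computation ($e(\nu_S)\equiv e(C)+e(C')\pmod 2$, which is zero because $M$ is spin and $w_2(TS^2)=0$) is fine, but note that the paper never uses spin in this lemma; the boundary-twist normalization renders it unnecessary.

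The genuine gap is in your treatment of the intersection term. You assert that pushing $S$ off $F$ along $\nu_F^M$ near $\alpha$ ``destroys the non-transverse $\alpha$-locus \ldots without altering the transverse intersections,'' and hence that $\lambda_2([S],[F])$ equals $\#(\mathring C\pitchfork F)+\#(\mathring{C'}\pitchfork F)$. That cannot be true for arbitrary $C,C'$: the right-hand side is not a homotopy invariant of $(S,F)$ --- a single boundary twist of $C$ changes $\#(\mathring C\pitchfork F)$ by one while leaving $[S]$, and hence $\lambda_2([S],[F])$, unchanged --- so the pushoff must in general create additional transverse intersections near $\alpha$, controlled by the relative framing of $\nu_\alpha^S\subseteq\nu_F^M|_\alpha$. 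This is precisely the contribution that the paper removes up front by arranging $e(C)=e(C')=0$. Your separate spin argument does show that $e(C)+e(C')$ is even, which is exactly the condition under which the $\alpha$-locus contributes trivially, so the conclusion survives; but as written you treat the two ``summands of $\varpi$'' as if each were independently well-defined and computable, which they are not, and you offer no justification for why the pushoff contributes nothing. To repair the proof you would either need to first establish $e(C)+e(C')\equiv 0$ and then argue the pushoff is intersection-free given that, or follow the paper and boundary-twist to framed discs before ever writing down $\lambda_2(F,S)$.
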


\begin{proof}
  Let $C$ and $C'$ be two choices of discs with boundary $F(\alpha)$, and let $\varpi_C(\alpha)$ and $\varpi_{C'}(\alpha)$ temporarily denote the count made using $C$ and $C'$ respectively.  Perform boundary twists~\cite[Section~1.3]{Freedman-Quinn} in order to arrange that $C$ and $C'$ are framed with respect to their boundaries, i.e.\ $e(C)=e(C')=0$. Boundary twists do not change the counts~$\varpi_C(\alpha)$ and $\varpi_{C'}(\alpha)$, since a boundary twist changes the relative Euler number of the disc by one and produces a single new intersection between the disc being twisted and $F$.  Now rotate $C'$ near $F(\alpha)$ so that the union of $C$ and $C'$ is a generically immersed 2-sphere. Since $F$ is $S^2$-characteristic, we have
  \[0 = \lambda_2([F],[C \cup_{\alpha} C']) = \varpi_C(\alpha) + \varpi_{C'}(\alpha) \in \Z/2\]
  as desired.
\end{proof}

Consider a hyperbolic basis of $H_1(\Sigma;\Z)$ represented by embedded circles $a_1,\ldots, a_n, b_1,\ldots, b_n$ that are disjoint from each other except that $a_i$ intersects $b_i$ transversely in a single point. Suppose that the restriction of $F$ to each of the $a_i$ and the $b_j$ is an embedding.

Since $\mu(F)=0$, all double points of $F$ can be paired up by generically immersed Whitney discs $W_1, \ldots , W_m \hookrightarrow M$ whose boundary arcs on $F(\Sigma)$ are disjoint from each other, the $F(a_i)$, and the $F(b_i)$. Let $n_j$ again denote the framing coefficient of the Whitney discs discussed in \cref{section:tau-spheres}.
Then define:
\[\tau(F):=\sum_{i=1}^n\varpi(a_i)\varpi(b_i) + \sum_{j=1}^m |\mathring{W}_j\pitchfork F|+n_j \mod 2.\]

\begin{remark}
Note that in the case that $\Sigma$ has genus zero, this reduces to the $\tau$ invariant of the previous subsection since the first sum vanishes.  Also note that in the case of an embedded surface, only the first summand appears, and again the definition simplifies.
The restriction to the case that~$F$ is an embedding is similar to the version of $\tau$ from \cite{Kirby-Freedman}. In this case it is the Arf invariant of the quadratic form given by $\varpi$.
\end{remark}

Next we will show that $\tau(F)$ is independent of the choice of basis $\{a_i,b_i\}$, as well as the choice of Whitney discs $W_j$.

\begin{lemma}
	If $F$ is $\RPT$-characteristic, then the expression $\tau(F) \in \Z/2$ is independent of the choices of $a_i$, $b_i$, $C$ and $W_j$ made in its definition.  Moreover, $\tau(F)$ only depends on the regular homotopy class of the generic immersion.
\end{lemma}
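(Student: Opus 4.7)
The plan is to decompose the invariant $\tau(F) = A(F) + B(F) \pmod 2$ into a ``symplectic'' summand
\[
A(F) := \sum_{i=1}^n \varpi(a_i)\varpi(b_i)
\]
and a ``Whitney'' summand
\[
B(F) := \sum_{j=1}^m \bigl(|\mathring W_j \pitchfork F| + n_j\bigr),
\]
and treat their independence of choices separately, before addressing regular homotopy invariance.

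The main step is to promote $\varpi$ to a quadratic refinement of the mod $2$ intersection pairing on $H_1(F;\Z/2)$. First I would verify that $\varpi(\alpha)$ depends only on the isotopy class of $\alpha$ in $F$, by applying \cref{lemma:independence-of-C} to two null-homotopies of the one-parameter family of discs obtained by extending $C$ along an ambient isotopy. Second, for two disjoint embedded circles $\alpha,\beta\subset F$ one can take $C_\alpha\cap C_\beta=\emptyset$, yielding $\varpi(\alpha\sqcup\beta)=\varpi(\alpha)+\varpi(\beta)$. Third, for $\alpha,\beta$ meeting transversely in a single point $p\in F$, resolve $p$ to obtain a circle representing $\alpha+\beta\in H_1(F;\Z/2)$ and build a disc bounding it from a boundary band sum of $C_\alpha$ and $C_\beta$ along a short arc near $p$; the band introduces exactly one new transverse intersection of the disc with $F$ (coming from the sheet of $F$ near $p$) and has a controlled effect on the relative Euler number, yielding
\[
\varpi(\alpha+\beta) = \varpi(\alpha) + \varpi(\beta) + 1 \pmod 2.
\]
Together these show that $\varpi$ is well-defined on $H_1(F;\Z/2)$ and is a quadratic refinement of the mod $2$ self-intersection form of $F$ viewed as a surface. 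Consequently $A(F)$ is the classical Arf invariant of $(\varpi, H_1(F;\Z/2))$, and in particular is independent of the choice of symplectic basis $\{a_i, b_i\}$.

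For independence of $B(F)$ under changes of Whitney pairing, Whitney arcs, and Whitney discs, the argument is essentially the one in \cite[Theorem~1]{schneiderman-teichner-tau} for the spherical case, which already covers Whitney data in the presence of an $\RPT$-characteristic surface: changing a Whitney disc modifies $|\mathring W_j\pitchfork F|+n_j$ by an even number thanks to the boundary-twist relation and the $S^2$-characteristic property of $F$ (which follows from $\RPT$-characteristic by \cref{lem:R-char-implies-s-char}); changing a Whitney arc in the presence of $2$-torsion is controlled by $\RPT$-characteristic; and re-pairing double points changes $B(F)$ by evaluating $\lambda_2$ against certain immersed spheres built from combining Whitney discs. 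One must also check that the choice of local cusps used to trivialise the normal bundle of $F$ in the definition of $\mu(F)$ does not affect $\tau$: adding a kink changes $B$ by one (through one new self-intersection paired by a small Whitney disc meeting $F$ once) and simultaneously changes a single $\varpi(a_i)$ by one, producing compensating contributions.

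Finally, for regular homotopy invariance, I would use that a generic regular homotopy of $F$ decomposes into isotopies, finger moves, and Whitney moves. Isotopies carry all of $\{a_i,b_i,C,W_j\}$ along and do not change $\tau$. A finger move creates a cancelling pair of self-intersections; pair these using a small embedded framed Whitney disc disjoint from $F$ and from all other data, contributing $0$ to $B(F)$ while leaving $A(F)$ unchanged. A Whitney move removes a pair of self-intersections along a given framed Whitney disc $W$ disjoint from $F$, and after the move we drop $W$ from the Whitney collection; this again changes $B(F)$ by zero and does not touch $A(F)$. The hard part of the proof is the quadratic refinement step, specifically showing that the band-sum construction computes the relative Euler number change correctly and that the boundary-twist corrections combine cleanly with the intersection count; once this is in hand, the remainder is a packaging of known arguments.
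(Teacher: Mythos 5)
Your proposal takes a genuinely different route from the paper. The paper does not attempt to prove the invariance of the two summands $A(F)$ and $B(F)$ separately. Instead, it surgers $F$ along the circles $a_i$, replacing $\nu a_i$ by two parallel copies of each cap $C_i$, to obtain a generically immersed sphere $S$ with $[S]=[F]\in\pi_2(M)$, and then shows by a direct count that $\tau(S) = \tau(F)$: the ``symplectic'' sum $\sum_i \varpi(a_i)\varpi(b_i)$ reappears as the contribution of the new self-intersections created by the caps, paired by Whitney discs built from parallel copies of the $C_i'$ union bands. Since $\tau(S)$ is already well-defined and regular-homotopy invariant by \cite{schneiderman-teichner-tau}, both properties are inherited by $\tau(F)$ in one stroke. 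This reduction to the sphere case is the key idea of the paper's proof; your plan re-proves the sphere machinery from scratch, and so is necessarily longer and more delicate.

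There are also genuine gaps. First, the heart of your direct approach --- verifying that $\varpi$ is a quadratic refinement via the band-sum construction, with the relative Euler number change and the new intersection with $F$ both controlled --- is the hard step and is left undone; you acknowledge this, but the whole scheme stands or falls on it. Second, the kink/cusp discussion is not correct as stated. A single kink creates a \emph{single} new self-intersection, which cannot be paired by a Whitney disc on its own, so ``one new self-intersection paired by a small Whitney disc'' does not parse. A \emph{pair} of cancelling local kinks can be paired, but the resulting local Whitney disc lives in a small ball disjoint from the $a_i$, the $b_i$, and the caps $C_{a_i}$, so it cannot change any $\varpi(a_i)$; the ``compensating contribution'' you invoke does not exist. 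The correct observation, which renders the worry moot, is that the small local Whitney disc for the cancelling pair is framed and has interior disjoint from $F$, contributing zero to $B(F)$. Third, your decomposition treats $A(F)$ and $B(F)$ as independent, but the Whitney discs $W_j$ and the caps $C_{a_i}$ can intersect one another in $M$; the bands in the paper's surgery construction are precisely what accounts for such interactions, and your scheme never addresses them.
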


\begin{proof}
We already showed in \cref{lemma:independence-of-C} that $\tau(F)$ is independent of the choices of the discs~$C$.

Choose a path in $M$ from each component of $F(\Sigma)$ to the base point of $M$. Since these paths are $1$-dimensional we can choose them so that the interiors of the paths do not intersect~$F$.
	Since~$F$ is $\pi_1$-trivial, it lifts to a generically immersed surface in $\wt{M}$, and hence defines an element of $H_2(\wt{M};\Z)\cong \pi_2(M)$. The strategy is to relate $\tau(F)$ to $\tau(S)$ for $S \in \pi_2(M)$, and use that $\tau(S)$ is well-defined by~\cite{schneiderman-teichner-tau}.
	
Choose generic null-homotopies $C_i\colon D^2\to M$ for $F(a_i)$ and $C_i'\colon D^2\to M$ for $F(b_i)$. As in the proof of \cref{lemma:independence-of-C}, perform boundary twists to arrange $e(C_i)=e(C_i')=0$. Again, this does not change~$\varpi(\alpha)$.
We can turn $F$ into a generically immersed $2$-sphere $S$ by performing surgeries along all the $F(a_i)$, and gluing in two parallel copies of each of the $C_i$ in place of a neighbourhood $\nu F(a_i)$ of~$F(a_i)$. We make the following observations.
\begin{enumerate}[(i)]
  \item\label{item-summand-i} Each intersection $\mathring{C}_i\pitchfork F$ yields a pair of cancelling self-intersections of $S$ paired by a Whitney disc constructed from (a parallel copy of) $C_i'$ union a band.  A schematic is shown in \cref{figure:tau-surfaces-one}.
	\begin{figure}[ht]
		\begin{center}
			\begin{tikzpicture}
				\node[anchor=south west,inner sep=0] at (0,0){\includegraphics[scale=0.3]{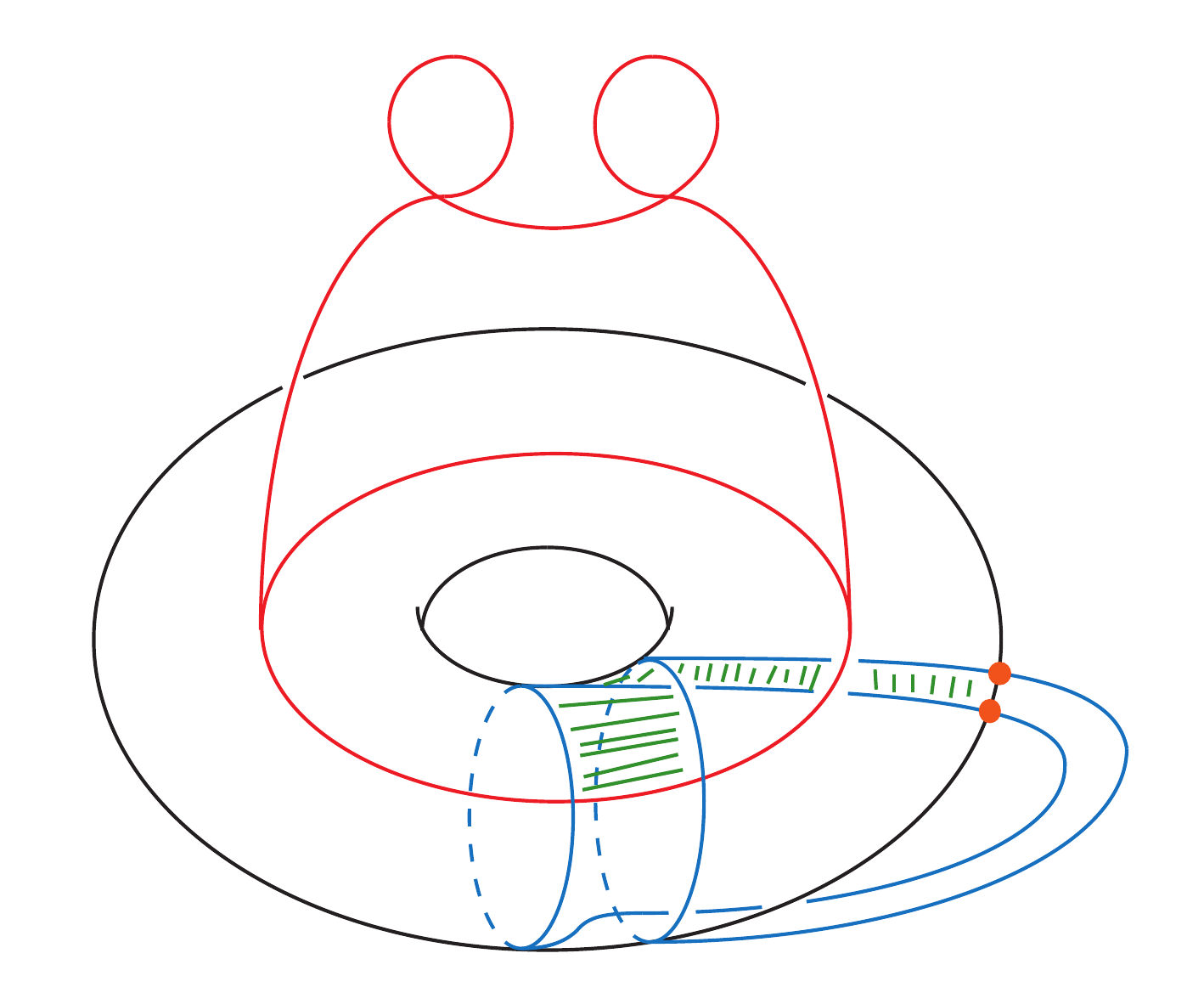}};
				\node at (6.4,2.1)  {$C$};
				\node at (2.05,5.3)  {$C'$};
			\end{tikzpicture}
		\end{center}
		\caption{A schematic of a genus one surface $F$ in $M$ with a cap $C'$ attached to the longitude, two parallel copies of a cap $C$ attached to the meridian, each of which intersect $F$ in a single point. A band is shown that, together with the cap $C'$, forms a Whitney disc pairing the two self-intersection points of the sphere obtained from surgery on $F$ using $C$.}
		\label{figure:tau-surfaces-one}
	\end{figure}
\item\label{item-summand-ii} 	Each self-intersection of $C_i$ yields two pairs of cancelling self-intersections of $S$, each with generically immersed Whitney disc a parallel copy of $C_i'$, union a band. A schematic is shown in \cref{figure:tau-surfaces-two}.
\end{enumerate}

	\begin{figure}[ht]
		\begin{center}
			\begin{tikzpicture}
				\node[anchor=south west,inner sep=0] at (0,0){\includegraphics[scale=0.3]{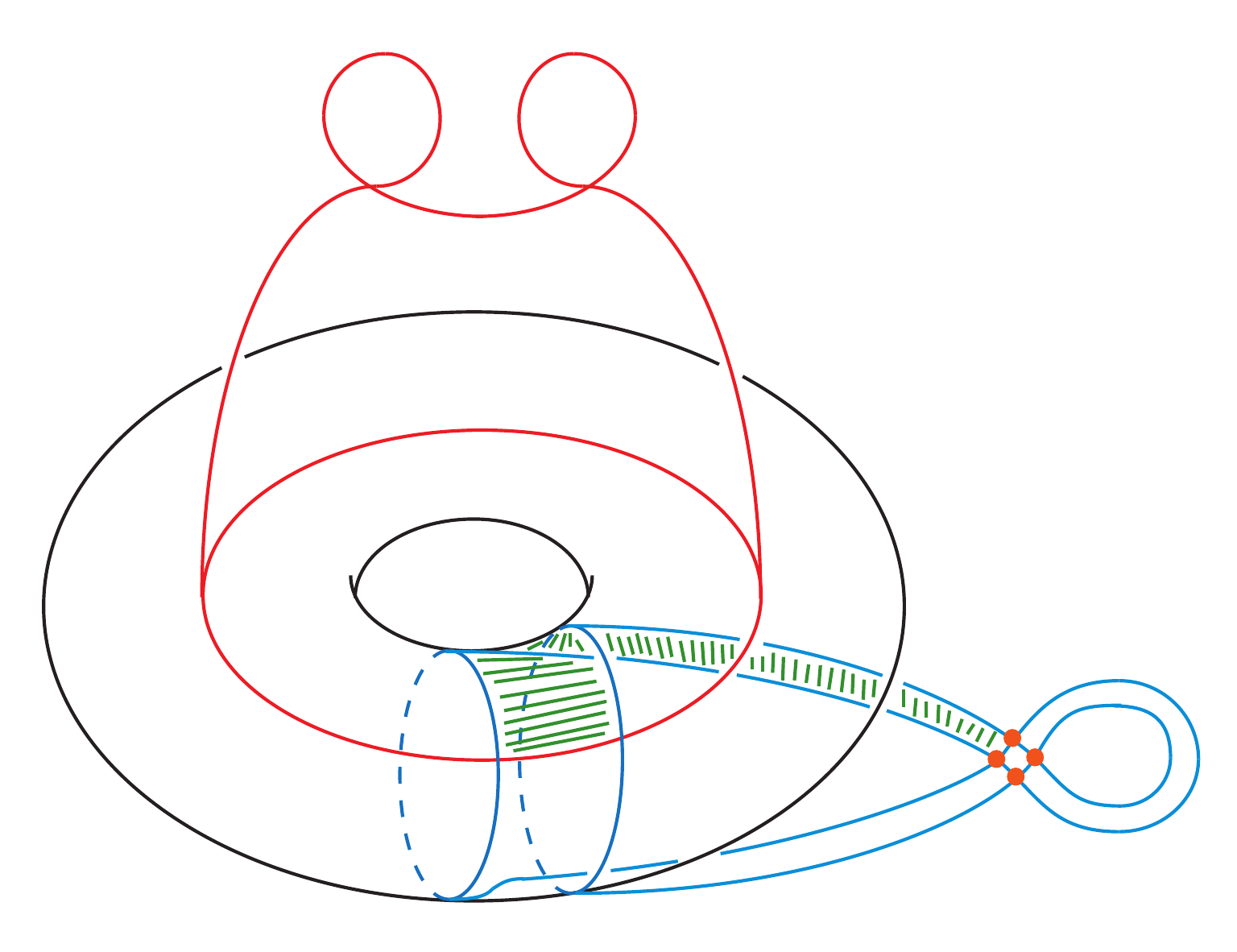}};
	\node at (6.1,1.85)  {$C$};
				\node at (1.75,5.3)  {$C'$};				
%
			\end{tikzpicture}
		\end{center}
		\caption{A schematic of a genus one surface $F$ with a cap $C'$ attached to the longitude and two parallel copies of a cap $C$ attached to the meridian. The cap $C$ has a single self-intersection points, which gives rise to four self-intersection points of the sphere resulting from surgery on $F$ using $C$. For one pair of these four points, a band is shown, that together with the cap $C'$, forms a Whitney disc pairing these two self-intersection points.}
		\label{figure:tau-surfaces-two}
	\end{figure}
	
	The boundary arcs of the new Whitney discs are disjoint from the boundary arcs of the old Whitney discs. Thus modulo two we see that
	\begin{align*}
		\tau(S)&=\sum_{i=1}^n\left(|\mathring{C}_i\pitchfork F|\cdot |\mathring{C}_i'\pitchfork F| + 2|\mathring{C}_i\pitchfork \mathring{C}_i|\cdot |\mathring{C}_i'\pitchfork F|\right)+ \sum_{j=1}^m |\mathring{W}_j\pitchfork F|+n_j\\&=\sum_{i=1}^n\varpi(a_i)\varpi(b_i) + \sum_{j=1}^m |\mathring{W}_j\pitchfork F|+n_j = \tau(F).
	\end{align*}
The first summand of the first summand corresponds to \eqref{item-summand-i} and the second summand corresponds to~\eqref{item-summand-ii}.

Note that $S$ and $F$ determine the same element of $\pi_2(M)$, with the right choice of basing paths,
since $S$ and $F$ determine the same element of $H_2(\wt M;\Z)$, which in turn holds because the difference $[S] - [F]$ (viewed as singular chains instead of homology classes) bounds the trace of the surgery along the $F(a_i)$.  It follows that~$S$ is $\RPT$-characteristic. We know that the number $\tau(S)$ only depends on the homotopy class of $S$ by~\cite{schneiderman-teichner-tau}. But the homotopy class of $S$ is determined by the regular homotopy class of the generic immersion $F$ and does not depend on the choices of $a_i, b_i, C_i, C_i', W_j$. Hence the fact that $\tau(S)$ is well-defined implies that $\tau(F)$ is too.
\end{proof}

\section{The Arf invariant in the stable classification}\label{section:Arf-and-AHSS}
We will prove \cref{thm:main-technical-thm-intro} by comparing the Kervaire--Milnor invariant to a codimension two Arf invariant that arises in the Atiyah--Hirzebruch spectral sequence for $\Omega_4^{\spin}(B\pi)$.  Let us explain how this Arf invariant appears.

Let $M$ be a closed, smooth, oriented, spin $4$-manifold, where the spin structure will be fixed from now on.
Let $K$ be a $2$-complex with fundamental group $\pi$ and let $i\colon K\to B\pi$ be the $2$-connected map.
Let $f\colon M\to K$ be a map that is an isomorphism on fundamental groups.

Denote the barycentres of the $2$-cells $\{e_i^2\}_{i\in I}$ of $K$ by $\{b_i^2\}_{i \in I}$. Denote the regular preimage of the barycentre $b_i^2 \in K$ by $N^f_i \subseteq M$. We can consider $[N^f_i] \in \Omega_{2}^{\spin}$ since the normal bundle of $N^f_i$ in $M$ is trivialised as a pullback of the normal bundle of $b_i^2$ in $e_i^2$, and hence $N^f_i$ inherits a spin structure from $M$. The next lemma is well-known; see \cite[Lemma~2.5]{KLPT15} for a proof.

\begin{lemma}\label{lem:arf}
	The homomorphism $\Omega_4^{\spin}(K) \to H_2(K;\Omega_{2}^{\spin})$ from the Atiyah--Hirzebruch spectral sequence coincides with composition of the map
	\[\begin{array}{rcl} \Omega_4^{\spin}(K) &\to&  H_2^{\operatorname{cell}}(K;\Omega_{2}^{\spin}) \\
		\lbrack f \colon M \to K \rbrack  &\mapsto&  \Big[\sum\limits_{i \in I} [N^f_i]\cdot e_i^2\Big],\end{array}\]
  and the canonical isomorphism $H_2^{\operatorname{cell}}(K;\Omega_{2}^{\spin}) \cong H_2(K;\Omega_{2}^{\spin})$.
	Moreover this maps to $\ter(M)$ under
	\[H_2(K;\Omega_2^{\spin}) \cong H_2(K;\Z/2) \to H_2(\pi;\Z/2) \to H_2(\pi;\Z/2)/\im (d_2, d_3).\]
\end{lemma}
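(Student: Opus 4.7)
The plan has two ingredients: a geometric description of the edge homomorphism $\Omega_4^{\spin}(K) \to H_2(K; \Omega_2^{\spin})$ via transverse preimages, and naturality of the AHSS under $i\colon K \to B\pi$ to identify the image with $\ter(M)$.

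For the first claim, because $K$ is $2$-dimensional, every map $f\colon M \to K$ factors through $K^{(2)} = K$, so the skeletal filtration of $\Omega_4^{\spin}(K)$ satisfies $F_2 = \Omega_4^{\spin}(K)$. The differentials out of $E^2_{2,2} = H_2(K;\Omega_2^{\spin})$ vanish for dimension reasons (since $\Omega_3^{\spin} = 0$ and $K$ has no cells beyond dimension two), so $E^\infty_{2,2} = H_2(K;\Omega_2^{\spin})$ and the edge map is just the quotient $\Omega_4^{\spin}(K) \to \Omega_4^{\spin}(K)/F_1$. To identify this quotient with the preimage construction, I would homotope $f$ to be transverse to each barycentre $b_i^2$. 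Then $N_i^f$ is a closed surface whose normal bundle is framed by pullback of the canonical framing of $\nu_{b_i^2 \subset e_i^2}$, and this framing combined with the spin structure on $M$ induces a spin structure on $N_i^f$. Taking transverse preimages of interior points of $1$-cells produces spin $3$-dimensional cobordisms between the preimages of the barycentres of adjacent $2$-cells, exhibiting $\sum_i [N_i^f] \cdot e_i^2$ as a cycle in $C^{\mathrm{CW}}_2(K;\Omega_2^{\spin})$. A Pontryagin--Thom argument as in \cite[Lemma 2.5]{KLPT15} shows the resulting homology class depends only on the bordism class of $(M,f)$ and coincides with the edge map.

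For the second claim, naturality of the AHSS applied to $i \colon K \to B\pi$ yields a commutative square whose horizontal arrows are the respective edge maps and whose vertical arrows are induced by $i_*$. Under the Arf invariant isomorphism $\Omega_2^{\spin} \cong \Z/2$, the right-hand vertical becomes $i_* \colon H_2(K;\Z/2) \to H_2(\pi;\Z/2)$. Since $\ter(M)$ is by definition the image of $[M, i\circ f] \in \Omega_4^{\spin}(B\pi)$ in $E^\infty_{2,2}(B\pi) = H_2(\pi;\Z/2)/\im(d_2,d_3)$, the claim follows by chasing around the diagram.

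The main obstacle is really just the identification in the first claim, namely the verification that the transverse-preimage construction agrees with the abstract edge homomorphism of the AHSS. This is a classical Pontryagin--Thom calculation, however, worked out in detail in \cite[Lemma 2.5]{KLPT15}, so the proof will likely reduce to invoking that reference after fixing conventions.
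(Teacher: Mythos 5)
Your proposal is correct and follows the same route as the paper: the paper itself simply cites \cite[Lemma~2.5]{KLPT15} for the identification of the edge map with the transverse-preimage construction and accompanies the statement with a remark unwinding the AHSS filtration (analogous to your first paragraph), while the ``moreover'' clause is the same naturality-of-AHSS diagram chase that you spell out. The only cosmetic difference is that you observe $E^\infty_{2,2}=E^2_{2,2}$ for $K$, whereas the paper records only the inclusion $E^\infty_{2,2}\subseteq E^2_{2,2}$, which is all that is needed to define the map.
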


\begin{remark}\label{remark:edge-HM-derivation}
The homomorphism in the statement of the lemma $\Omega_4^{\spin}(K) \to H_2(K;\Omega_{2}^{\spin})$ arises as follows.  The abutment of the Atiyah--Hirzebruch spectral sequence $\Omega_4^{\spin}(K) = F_{4,0}$ maps to its quotient by the first filtration step $F_{0,4}=F_{1,3}$ that differs from $F_{4,0}$.  This term is indeed $F_{1,3}$, since the homology of $K$ vanishes in degrees greater than $2$, thus $E^2_{p,q} = E^{\infty}_{p,q} =0$ for all $p>2$.  Moreover, because $K$ is a 2-complex no differentials have image in $E^k_{2,2}$, for any $k$, so $E^{\infty}_{2,2} \subseteq E^{2}_{2,2}$. The composition
\[\Omega_4^{\spin}(K) = F_{4,0} \to F_{4,0}/F_{1,3} \xrightarrow{\cong} E^{\infty}_{2,2} \to E^{2}_{2,2} = H_2(K;\Omega_{2}^{\spin}).\]
gives the desired map.
\end{remark}

We will need a slight variation of \cref{lem:arf}. For $\phi\in H^2(K;\Z/2)$, represent $\phi$ by a map $K\to S^2\subseteq K(\Z/2,2)$ and let $F^f_\phi\subseteq M$ be a regular preimage of a point $s\in S^2$ under $\phi\circ f \colon M \to S^2$. As before, a framing of the normal bundle of $s$ in $S^2$ induces a framing of the normal bundle of $F^f_\phi$ in~$M$, and since~$M$ is spin, we obtain a spin structure on~$F^f_\phi$. Thus we can again consider $[F^f_\phi]$ in $\Omega_2^\spin$.

\begin{lemma}\label{lem:arf-homo-plus-computes-ter}
	The composition
	\[\Omega_4^\spin(K)\to H_2(K;\Z/2)\xrightarrow{x \mapsto \langle-,x\rangle}\Hom_{\Z/2}(H^2(K;\Z/2),\Z/2)\]
	maps $[f\colon M\to K]$ to $\big(\phi\mapsto [F^f_\phi]\in\Omega_2^\spin\cong\Z/2$\big).
\end{lemma}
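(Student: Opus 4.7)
The plan is to reduce the statement to \cref{lem:arf} by identifying $[F_\phi^f]$ with the evaluation of $\phi$ against the cycle $\sum_i [N_i^f]\cdot e_i^2$. By \cref{lem:arf}, the first map sends $[f\colon M\to K]$ to the class $\big[\sum_{i\in I}[N_i^f]\cdot e_i^2\big]\in H_2(K;\Omega_2^{\spin})\cong H_2(K;\Z/2)$. Evaluating this class on $\phi\in H^2(K;\Z/2)$ via the Kronecker pairing gives
\[\sum_{i\in I}\phi(e_i^2)\cdot[N_i^f]\in\Omega_2^{\spin}\cong\Z/2,\]
where $\phi(e_i^2)\in\Z/2$ denotes the value of a cellular cocycle representing $\phi$ on the $2$-cell $e_i^2$. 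The task is thus to identify $[F_\phi^f]$ with this sum.

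To do so, I would choose a convenient representative of $\phi\colon K\to S^2$. Since $K$ is $2$-dimensional, by cellular approximation I may homotope $\phi$ to send the $1$-skeleton $K^{(1)}$ to the basepoint $*\in S^2$; then on each $2$-cell the map $\phi|_{e_i^2}\colon (D^2,\partial D^2)\to (S^2,*)$ has some degree $d_i\in\Z$ with $d_i\equiv\phi(e_i^2)\pmod 2$. Picking a regular value $s\in S^2\sm\{*\}$, the preimage $\phi^{-1}(s)$ consists of $|d_i|$ points $\{p_{i,j}\}_{j}$ in the interior of each $e_i^2$, near each of which $\phi$ is a local diffeomorphism onto a neighbourhood of $s$. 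Consequently $F_\phi^f=(\phi\circ f)^{-1}(s)$ decomposes as the disjoint union $\bigsqcup_{i,j} f^{-1}(p_{i,j})$, and each summand is spin-diffeomorphic to $N_i^f$ since both arise as regular preimages of points in the interior of the same $2$-cell under $f$. Hence in $\Omega_2^{\spin}\cong\Z/2$ one obtains
\[[F_\phi^f]=\sum_{i\in I}|d_i|\cdot [N_i^f]=\sum_{i\in I}\phi(e_i^2)\cdot [N_i^f],\]
matching the pairing above.

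The main technical point to verify is the compatibility of spin structures. The spin structure on $f^{-1}(p_{i,j})$ is induced by pulling back, through $\phi\circ f$, a framing of the normal bundle of $s$ in $S^2$, whereas that on $N_i^f$ is induced by pulling back, through $f$, a framing of the normal bundle of $b_i^2$ in $e_i^2$. Since $\phi$ is a local diffeomorphism near $p_{i,j}$, a chosen framing of $s$ pulls back to a framing of $p_{i,j}$ in $e_i^2$, and then to a framing of $f^{-1}(p_{i,j})$ in $M$; this framing coincides with the one used for $N_i^f$ up to an orientation of the local chart, which is immaterial in $\Omega_2^{\spin}\cong\Z/2$. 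The remaining content of the argument is routine cellular approximation together with bookkeeping of signed preimages modulo two.
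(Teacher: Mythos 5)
Your proposal is correct and follows essentially the same approach as the paper: reduce to \cref{lem:arf}, choose a cellular representative of $\phi\colon K\to S^2$, and match $[F^f_\phi]$ with the Kronecker pairing $\langle\phi,\sum_i[N^f_i]\cdot e_i^2\rangle$. The only notable difference is a cosmetic one: you allow $\phi|_{\ol e_i^2}$ to have arbitrary degree $d_i$, forcing you to compare $|d_i|$ separate regular preimages $f^{-1}(p_{i,j})$ against $N^f_i$ via bordism (isotoping regular points within the open $2$-cell gives a spin bordism of preimages; ``spin-diffeomorphic'' is stronger than you need and than you can cheaply assert), whereas the paper normalises the representative so that each $2$-cell maps with degree $0$ or $\pm 1$ and has the barycentre $b_i^2$ as its unique regular preimage, so that $F^f_\phi$ is \emph{literally} $\bigsqcup_{i\in E(\phi)}N^f_i$ with no further comparison needed. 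Both routes are sound; the paper's normalisation simply removes the bookkeeping.
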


\begin{proof}
By a homotopy of $f$ we can assume that the $b^2_i$ are regular points. The first map sends
\begin{align*}
\Omega_4^\spin(K) &\to H_2(K;\Z/2)\\
[f \colon M \to K] & \mapsto \sum\limits_{i \in I} [N^f_i]\cdot e_i^2
\end{align*}
 with $N_i^f := f^{-1}(b^2_i)$ as in \cref{lem:arf}.
Let $p\in S^2$ be a basepoint and let $s \in S^2$ be antipodal to $p$.
As above, given $\phi\in H^2(K;\Z/2)$, we represent $\phi$ by a map $K\to S^2\subseteq K(\Z/2,2)$.
We can choose the representative $\phi \colon K \to S^2$ so that for each 2-cell $e_i^2$ of $K$, either: (i) $\varphi|_{\ol{e}_i^2}$ sends the whole closed 2-cell to $p$, or (ii) $\varphi|_{\ol{e}_i^2}$  factors as the quotient map followed by a homeomorphism $\ol{e}_i^2 \to \ol{e}_i^2/\partial \ol{e}_i^2 \cong S^2$, with $\phi (\partial \ol{e}_i^2 ) = p$,  $\phi(b_i^2)= s$, and such that $b_i^2 \in e_i^2$ is a regular preimage of $s\in S^2$. Let $E(\phi) \subseteq I$ be the subset of indices corresponding to the cells for which the latter option (ii) holds. Then $\sqcup_{i\in E(\phi)} b_i^2$ is a regular preimage of $s\in S^2$ under $\phi$.

Let $F_\phi^f := (\phi \circ f)^{-1}(\{s\})$, as above.   As usual, $[F^f_\phi]$ does not depend on the choice of a representative for $\phi$ since different choices give spin bordant surfaces.
Hence  $F^f_\phi=\sqcup_{i\in E(\phi)} N_i^f$.
Then \[\big(\phi \mapsto [F^f_\phi]=\sum_{i\in E(\phi)}[N_i^f] \in \Omega_2^\spin\cong \Z/2\big) \in \Hom_{\Z/2}(H^2(K;\Z/2),\Z/2)\] is the image of $\sum\limits_{i \in I} [N^f_i]\cdot e_i^2$ under the evaluation map $H_2(K;\Z/2)\xrightarrow{}\Hom_{\Z/2}(H^2(K;\Z/2),\Z/2)$, as needed.
\end{proof}

For our comparison of the Kervaire--Milnor invariant with the codimension 2 Arf invariant, we need to recall the definition of the Arf invariant $\Arf\colon \Omega_2^\spin\xrightarrow{\cong}\Z/2$. Let $\Sigma$ be spin surface.
One defines a quadratic refinement of the $\Z/2$ intersection form of $\Sigma$, $\Upsilon\colon H_1(\Sigma;\Z/2)\to \Z/2$ as follows.   Represent $[\alpha] \in H_1(\Sigma;\Z/2)$ by a simple closed curve $\alpha$ in $\Sigma$. Since the normal bundle $\nu_\alpha^\Sigma$ of $\alpha$ in $\Sigma$ is one dimensional, the normal bundle $\nu_\alpha^\Sigma$ has a canonical framing, where the choice of the direction comes from the orientations. Therefore, together with the spin structure on $\Sigma$, this determines a spin structure on $\alpha$, so we may consider it as an element of $\Omega_1^{\spin}$. We define $\Upsilon([\alpha])=0$ if and only if $\alpha$ is spin null-bordant. Then $\Arf(\Sigma)$ is defined to be the Arf invariant of the quadratic form $(H_1(\Sigma;\Z/2),\lambda_\Sigma, \Upsilon)$.

\section{Proof of \texorpdfstring{\cref{thm1,thm:main-technical-thm-intro}}{Theorems 1.1 and 1.5}}
\label{sec:tertiary-obstruction}


First let us recall the setup of \cref{thm:main-technical-thm-intro}.
Let $M$ be a closed, smooth, oriented, spin $4$-manifold with fundamental group $\pi$, and suppose that there is a map $f\colon M\to K$ to a finite 2-complex $K$ that is an isomorphism on fundamental groups.  Let $i \colon K \to B\pi$ be a $2$-connected map.

We start by proving \cref{thm:main-technical-thm-intro}~\eqref{it:main0}; it follows immediately from the next lemma.

\begin{lemma}\label{lemma-for-main-thm-i}
 The map $\red_2\colon H^2(K;\Z\pi) \to H^2(K;\Z/2)$
 is surjective.
\end{lemma}

\begin{proof}
  This follows from Bockstein sequence associated with $0 \to \ker(\red_2) \to \Z\pi \xrightarrow{\red_2} \Z/2 \to 0$, using that $H^3(K;\ker(\red_2))=0$ since $K$ is 2-dimensional.
\end{proof}

We move on to the proof of \cref{thm:main-technical-thm-intro}~\eqref{it:main1}. It states:
\emph{The element $PD(f^*(\widehat{\phi})) \in H_2(M;\Z\pi) \cong \pi_2(M)$ is $\RPT$-characteristic and has trivial self-intersection number, so that the Kervaire--Milnor invariant $\tau(PD( f^*(\widehat{\phi}))) \in \Z/2$ is well-defined.}

Let $M^{(3)}$ be the 3-skeleton of $M$ for some chosen handle decomposition.
The following lemma is more general than needed in the paper, since it starts with $g \colon M^{(3)}\to K$ only defined on the 3-skeleton, but we give the generalisation here since we appeal to it in \cite{KPT20}.

\begin{lemma}
	\label{lem:char}
	Let $g\colon M^{(3)}\to K$ be a map that is an isomorphism on fundamental groups. Let $j\colon M^{(3)}\to M$ be the inclusion of the $3$-skeleton. For every $\phi\in \ker \Sq^2\subseteq H^2(\pi;\Z/2)$ and every lift $\wh\phi\in H^2(K;\Z\pi)$ of $i^*\phi \in H^2(K;\Z/2)$, the element
\[(PD\circ (j^*)^{-1}\circ g^*)(\wh \phi) \in H_2(M;\Z\pi) \cong \pi_2(M)\]
is $\RPT$-characteristic.
\end{lemma}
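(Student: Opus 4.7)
The plan is to verify the $\RPT$-characteristic property of $x := PD \circ (j^*)^{-1} \circ g^*(\widehat\phi)$ directly: for every map $R \colon \RPT \to M$ I will show that the mod-$2$ intersection $\lambda_2(\red_2(x), R_*[\RPT]_{\Z/2})$ vanishes. The key observation is that $\red_2(x) \in H_2(M;\Z/2)$ depends on the lift $\widehat\phi$ only through its reduction $\red_2\widehat\phi = i^*\phi$, so the $\RPT$-characteristic condition will reduce to a statement about $\phi$ on $B\pi$ alone.

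To carry this out, first I would use cellular approximation to homotope $R$ so that it factors as $R = j\circ R'$ with $R' \colon \RPT \to M^{(3)}$; this is possible since $\RPT$ is $2$-dimensional. Naturality of Poincar\'e duality and of $\red_2$, together with the identity $\red_2\widehat\phi = i^*\phi$, then rewrites the pairing as $\langle \tilde R^*\phi, [\RPT]_{\Z/2}\rangle$, where $\tilde R := i\circ g\circ R' \colon \RPT\to B\pi$. Thus it suffices to prove $\tilde R^*\phi = 0 \in H^2(\RPT;\Z/2)$.

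I would then split into two cases according to the homomorphism $\tilde R_*\colon\pi_1(\RPT) = \Z/2 \to \pi$. If $\tilde R_*$ is trivial, then $\tilde R$ lifts through the universal cover $E\pi \to B\pi$, and since $E\pi$ is contractible, $\tilde R$ is null-homotopic, so $\tilde R^*\phi = 0$ automatically. If $\tilde R_*$ is nonzero, its image is a subgroup $\langle g_0 \rangle \cong \Z/2$ of $\pi$ generated by an order-$2$ element $g_0$, and by the covering space lifting criterion $\tilde R$ factors as $\iota \circ \bar R$ with $\bar R\colon \RPT \to B\langle g_0\rangle \simeq B\Z/2$ and $\iota\colon B\langle g_0\rangle \to B\pi$ the covering associated to $\langle g_0\rangle$. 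Then $\tilde R^*\phi = \bar R^*(\iota^*\phi)$, and the task reduces to showing $\iota^*\phi = 0 \in H^2(B\Z/2;\Z/2)$.

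The main obstacle is this final step, which is where the hypothesis $\phi \in \ker\Sq^2$ is used. By naturality of $\Sq^2$, $\Sq^2(\iota^*\phi) = \iota^*\Sq^2\phi = 0$. Since $H^*(B\Z/2;\Z/2) = \Z/2[v]$ is polynomial on a degree-$1$ generator $v$, and $\Sq^2(v^2) = v^4 \ne 0$, the operation $\Sq^2\colon H^2(B\Z/2;\Z/2) \to H^4(B\Z/2;\Z/2)$ is injective. Therefore $\iota^*\phi = 0$, which completes the argument.
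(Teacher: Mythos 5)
Your proof is correct and rests on the same key fact as the paper's: $\Sq^2 \colon H^2(\RP^\infty;\Z/2) \to H^4(\RP^\infty;\Z/2)$ is injective, so classes in $\ker\Sq^2$ cannot pair nontrivially with $\RPT$'s in $B\pi$. The packaging differs slightly. The paper fixes a classifying map $c\colon M\to B\pi$ with $i\circ g\simeq c\circ j$ and, using that $B\pi$ is aspherical, extends the composite $c\circ\beta\colon\RPT\to B\pi$ (for a given $\beta\colon\RPT\to M$) to $\beta'\colon\RP^\infty\to B\pi$ by elementary obstruction theory, then reads off the contradiction from $\Sq^2(\beta')^*\phi=(\beta')^*\Sq^2\phi$. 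You instead pass $\tilde R\colon\RPT\to B\pi$ through the covering of $B\pi$ corresponding to the image of $\pi_1(\RPT)$, with a case split on whether that image is trivial or a $\Z/2$. The two are effectively dual: your case split and covering-space lift are absorbed by the paper into a single obstruction-theoretic extension (the trivial-image case becomes the $(\beta')^*\phi=0$ possibility). Both arguments are sound; the paper's is marginally more economical in that it avoids the case distinction, while yours makes the reliance on $B\Z/2=\RP^\infty$ more explicit. One small stylistic note: your initial factoring of $R$ through $M^{(3)}$ via cellular approximation is harmless but not strictly necessary here, since for the full map $f\colon M\to K$ one can work with $c\circ\beta$ directly as the paper does; the $3$-skeleton formulation in the lemma's statement is there for a later application, not because the argument requires it.
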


In \cref{lem:char} we used the following sequence of maps:
\[\wh\phi \in H^2(K;\Z\pi) \xrightarrow{g^*} H^2(M^{(3)};\Z\pi) \xrightarrow{(j^*)^{-1}} H^2(M;\Z\pi) \xrightarrow{PD} H_2(M;\Z\pi) \cong \pi_2(M).\]

\begin{proof}
	Fix a map $\beta\colon \RP^2\to M$.
    Let $c \colon M \to B\pi$ be a $2$-connected map such that $i_* \circ g_* = c_* \circ j_* \colon \pi_1(M^{(3)}) \to \pi_1(B\pi)=\pi$, from which it follows that $i \circ g$ and  $c \circ j$ are homotopic.
	The following equalities prove that $PD((j^*)^{-1}g^*\wh\phi)\in \pi_2(M)$ is $\RPT$-characteristic.  We will give justification for each of the equalities afterwards.
		\[\begin{array}{rccl}
		  &\lambda_2(\red_2(PD((j^*)^{-1}g^*(\wh\phi))),\beta_*[\RP^2]) &=&\lambda_2(\beta_*[\RP^2],\red_2(PD((j^*)^{-1}g^*(\wh\phi)))) \\
		=&\langle \red_2((j^*)^{-1}g^*(\wh\phi)),\beta_*[\RP^2]\rangle &=&\langle (j^*)^{-1}g^*\red_2(\wh\phi),\beta_*[\RP^2]\rangle \\
		=&\langle (j^*)^{-1}g^*i^*(\phi),\beta_*[\RP^2]\rangle &=&\langle (j^*)^{-1}j^*c^*(\phi),\beta_*[\RP^2]\rangle \\
		=&\langle c^*(\phi),\beta_*[\RP^2]\rangle &=& \langle \beta^*c^*(\phi),[\RP^2]\rangle.
	\end{array}\]
	The first equality uses symmetry of $\lambda_2$. The second equality is the algebraic definition of the intersection form. The third equality uses the naturality of the reduction mod 2. The fourth equality uses that, by definition of $\wh\phi$, $\red_2(\wh\phi) = i^*\phi$. The fifth equality uses that $i \circ g$ and $c \circ j$ are homotopic.
The last equality uses the naturality of the evaluation.
	
Using obstruction theory and the fact that $B\pi$ is aspherical, the map $c\circ \beta\colon \RP^2\to B\pi$ extends to a map $\beta'\colon \RP^\infty\to B\pi$. Now assume for a contradiction that $\langle \beta^*c^*(\phi),[\RP^2]\rangle$ is nontrivial. Then $\beta^*c^*(\phi)\in H^2(\RP^2;\Z/2)$ is nontrivial and hence also $(\beta')^*(\phi)\in H^2(\RP^\infty;\Z/2)$ is nontrivial. In this case, also $\Sq^2((\beta')^*(\phi))=(\beta')^*\Sq^2(\phi)\in H^4(\RP^\infty;\Z/2)$ has to be nontrivial. But we assumed that $\phi$ lies in the kernel of $\Sq^2$. Hence $\langle \beta^*c^*(\phi),[\RP^2]\rangle$ has to be trivial, as desired.
\end{proof}

As explained in \cref{section:tau-spheres}, the Kervaire--Milnor invariant is well-defined on $\RPT$-characteristic spheres with vanishing self-intersection number, so \cref{thm:main-technical-thm-intro}~\eqref{it:main1} follows from the next two lemmas.

\begin{lemma}\label{lem:spherically-char}
	The element $PD(f^*(\wh \phi)) \in H_2(M;\Z\pi) =\pi_2(M)$ is $\RPT$-characteristic.
\end{lemma}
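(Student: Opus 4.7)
The plan is to derive this lemma as an immediate specialisation of \cref{lem:char}. That lemma produces an $\RPT$-characteristic element in $\pi_2(M)$ out of any map $g\colon M^{(3)}\to K$ which is an isomorphism on $\pi_1$, via the formula $PD\circ (j^*)^{-1}\circ g^*$. Since in our situation we already have a map $f$ defined on all of $M$, the natural move is to restrict $f$ to the $3$-skeleton and verify that the resulting expression reduces to $PD(f^*(\wh\phi))$.

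Concretely, I would set $g := f\circ j\colon M^{(3)}\to K$, where $j\colon M^{(3)}\hookrightarrow M$ is the inclusion of the $3$-skeleton. Since $j$ induces an isomorphism on fundamental groups (cellular approximation) and $f$ does by hypothesis, $g$ induces an isomorphism on $\pi_1$ as well. Then $g^* = j^*\circ f^*$, so
\[
(j^*)^{-1}\circ g^* \;=\; (j^*)^{-1}\circ j^*\circ f^* \;=\; f^* \colon H^2(K;\Z\pi)\to H^2(M;\Z\pi),
\]
and hence $(PD\circ (j^*)^{-1}\circ g^*)(\wh\phi) = PD(f^*(\wh\phi))$ in $\pi_2(M)$. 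Applying \cref{lem:char} to $g$ then yields that this element is $\RPT$-characteristic.

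There is no real obstacle: the content of the lemma is entirely contained in \cref{lem:char}, and the only thing to check is that $j^*\colon H^2(M;\Z\pi)\to H^2(M^{(3)};\Z\pi)$ is an isomorphism (this is standard from cellular cohomology, since $H^2$ depends only on the $3$-skeleton). Thus the proof amounts to two or three lines, recording the choice $g=f\circ j$, the identity $f^* = (j^*)^{-1}\circ g^*$, and the invocation of \cref{lem:char}.
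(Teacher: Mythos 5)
Your proposal is correct and matches the paper's own proof, which likewise just applies \cref{lem:char} to $g = f\circ j$ and observes that $(j^*)^{-1}\circ g^* = f^*$. You merely spell out the brief verification that the paper leaves implicit.
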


\begin{proof}
	This follows directly from \cref{lem:char} applied to the composition $f \circ j \colon M^{(3)} \to K$, where as in that lemma $j \colon M^{(3)} \to M$ is the inclusion of the $3$-skeleton.
\end{proof}

\begin{lemma}\label{lem:self-int-zero}
	For every $x,y \in H^2(K;\Z\pi)$, we have that $\lambda_M(PD(f^*(x)),PD(f^*(y)))=0$.
	In particular, the self-intersection number vanishes: $\mu(PD(f^*(x)))=0$.
\end{lemma}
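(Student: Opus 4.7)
The plan is to use the cup product formulation of the equivariant intersection form together with the 2-dimensionality of $K$, which forces the relevant cup product to vanish. First I would recall that for any $\alpha, \beta \in H^2(M;\Z\pi)$, one has the standard cap--cup adjunction
\[
\lambda_M(PD(\alpha), PD(\beta)) \;=\; \langle \beta, \alpha \cap [M]\rangle \;=\; \langle \beta \cup \alpha, [M]\rangle,
\]
where the cup product on the right takes values in $H^4(M;\Z\pi)$, using the $\Z\pi$-bimodule structure on $\Z\pi$ (with involution, as needed to make $\lambda_M$ sesquilinear). This identity is an immediate consequence of the definition $\lambda_M(a,b) = \langle PD^{-1}(b), a\rangle$ given in the introduction.

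Next I would apply naturality of the cup product to the map $f\colon M\to K$, giving
\[
f^*(y) \cup f^*(x) \;=\; f^*(y \cup x) \;\in\; H^4(M;\Z\pi).
\]
Since $K$ is a 2-complex, $H^4(K;\Z\pi)=0$ for dimension reasons, so $y\cup x = 0$ and hence $f^*(y)\cup f^*(x)=0$. Substituting $\alpha = f^*(x)$ and $\beta = f^*(y)$ in the cup product formula yields $\lambda_M(PD(f^*(x)), PD(f^*(y))) = \langle 0, [M]\rangle = 0$, which is the first claim.

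For the ``in particular'' clause, set $x=y$ to obtain $\lambda_M(PD(f^*(x)),PD(f^*(x))) = 0$. Since $M$ is spin, \cref{lem:lambda-zero-mu-zero} then forces $\mu(PD(f^*(x))) = 0$.

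The only subtlety worth flagging is the verification that the standard cup--cap identity goes through in the twisted setting with the correct involution conventions; this is well-known but requires some bookkeeping. The dimension-counting argument itself is completely routine once the cup product formulation is in place, so I do not expect any further obstacle.
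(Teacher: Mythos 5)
Your proof is correct, and it is essentially the Poincar\'e-dual reformulation of the argument in the paper. Where you pull the cup product back from $K$ and observe that $y\cup x$ already dies in $H^4(K;-)=0$, the paper pushes the fundamental class forward: it rewrites
\[
\lambda(PD(f^*(x)),PD(f^*(y))) = \langle f^*(y),\, f^*(x)\cap[M]\rangle = \langle y,\, f_*(f^*(x)\cap[M])\rangle = \langle y,\, x\cap f_*([M])\rangle,
\]
using naturality of the Kronecker pairing and the projection formula, and then kills the term with $f_*([M]) = 0 \in H_4(K;\Z)$. Both proofs rest on exactly the same dimension count for the $2$-complex $K$, but the paper's version stays entirely on the cap-product/Kronecker side of duality and therefore avoids the twisted cup product (and the bimodule/involution bookkeeping you flag at the end). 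Your route is perfectly valid, but the projection-formula phrasing is slightly cleaner precisely because the only coefficient-theoretic input it needs is naturality of $\langle-,-\rangle$ and of the cap product, both of which are unproblematic with $\Z\pi$ coefficients. The ``in particular'' clause via \cref{lem:lambda-zero-mu-zero} is handled the same way in both.
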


\begin{proof}
	Since $K$ is $2$-dimensional we have $f_*([M])=0$, and thus
	\begin{align*}
		\lambda(PD(f^*(x)),PD(f^*(y)))&=\langle f^*(y),PD(f^*(x))\rangle=\langle y,f_*(f^*(x)\cap [M])\rangle\\&=\langle y,x\cap f_*([M])\rangle=\langle y,0\rangle=0 .
\end{align*}
The last sentence in the statement, that $\mu(PD(f^*(x)))=0$, now follows from \cref{lem:lambda-zero-mu-zero}.
\end{proof}

This completes the proof of \cref{thm:main-technical-thm-intro}~\eqref{it:main1}.
The methods used in its proof also allow us to prove \cref{thm1}.
Recall that we have to show:
\emph{(i) For every element $x \in \ker \Sq$, $PD(f^*(x)) \in \pi_2(M)$  has trivial self-intersection number and is $\RPT$-characteristic.
(ii) The induced map $\tau_{M,f} \colon \ker \Sq \to \Z/2$ factors through $\Z/2\otimes_{\Z\pi} \ker \Sq $. (iii) $\tau_{M,f}$, up to the action of $\Aut(\pi)$ on the choice of $f$, is a stable diffeomorphism invariant.}

\begin{proof}[Proof of \cref{thm1}]
	By \cref{lem:self-int-zero} every element in the radical $\Rad(\lambda_M)$ has trivial self-intersection number. We will now show that every element in $PD(f^*(\ker \Sq))$ is also $\RPT$-characteristic. Let $x\in \ker \Sq \subseteq H^2(\pi;\Z\pi)$ and fix a map $\beta\colon \RP^2\to M$.  As in the proof of \cref{lem:char},
\[\lambda_2(\red_2(PD(f^*(x))),\beta_*[\RPT])= \langle \red_2(f^*(x)),\beta_*[\RPT]\rangle = \langle \red_2(\beta^*f^*(x)),[\RPT]\rangle.\]
Let $\beta'\colon \RP^\infty\to B\pi$ be an extension of $f\circ \beta$.
We continue to follow the pattern of the proof of \cref{lem:char}.  Assume for a contradiction that $\langle \red_2(\beta^*f^*(x)),[\RPT]\rangle$ is nontrivial. Then $\red_2(\beta^*f^*(x)) \neq 0 \in H^2(\RPT;\Z/2)$, so $\red_2 \circ (\beta')^*(x)$ is nontrivial in $H^2(\RP^{\infty};\Z/2)$. Therefore
\[(\beta')^*(\Sq(x)) = (\beta')^*(\Sq^2 \circ \red_2(x)) = \Sq^2 \circ \red_2((\beta')^*(x))  \in H^4(\RP^{\infty};\Z/2)\]
is also nontrivial.  This contradicts that $x \in \ker(\Sq)$.
We deduce that $\langle \red_2(\beta^*f^*x),[\RP^2]\rangle$ vanishes, so that $PD(f^*(x))$ is $\RPT$-characteristic as desired, proving~\eqref{thm1:item1}.
	
Since stabilisation does not change the value of $\tau$, it follows that $\tau_{M,f}$ is a stable diffeomorphism invariant, up to the choice of $f$, as asserted in \eqref{thm1:item3}.

 It remains to show \eqref{thm1:item2}, that $\tau_{M,f}$ factors through $\Z/2\otimes_{\Z\pi}\ker \Sq$. Assume $[x]=[y]\in \Z/2\otimes_{\Z\pi}\ker \Sq$. Then there are $z_i\in \ker \Sq$ and $\kappa_i\in \ker(\red_2\colon\Z\pi\to \Z/2)$ such that $x=y+\sum_{i=1}^n\kappa_iz_i$. As $\mu(y)=\mu(z_i)=\lambda(y,z_i)=0$, it follows from \cite[Lemma~8.3]{KLPT15} that $\tau(y)=\tau(y+\sum_{i=1}^n\kappa_i z_i)=\tau(x)$.
\end{proof}

Now we continue with the proof of \cref{thm:main-technical-thm-intro}. We have proven \cref{thm:main-technical-thm-intro}~\eqref{it:main1}, and so we may define $\tau(PD(f^*(\widehat{\phi}))) \in \Z/2$.
We will prove \cref{thm:main-technical-thm-intro}~\eqref{it:main2} and \eqref{it:main3} by comparing the Kervaire--Milnor invariant to the Arf invariant.

Recall that we have to show the following.
\emph{
\begin{enumerate}
\item[(iii)] The map  $\wh{\tau}_{M,f} \colon \ker \Sq^2 \to \Z/2 ;\;\; \phi  \mapsto \tau(PD(f^*(\widehat{\phi})))$
is a well-defined homomorphism.
\item[(iv)] Under the map
$\Hom(\ker \Sq^2,\Z/2) \xrightarrow{\cong} H_2(\pi;\Z/2)/\im \Sq_2 \to H_2(\pi;\Z/2)/\im (d_2,d_3),$
$\wh\tau_{M,f}$ is sent to $\ter(M)$.
\end{enumerate}
}

\begin{remark}
Let us explain the strategy of the rest of the proof.
It follows from \cref{lem:arf,lem:arf-homo-plus-computes-ter} that computing the Arf invariant $\Arf(f^*i^*(\phi))$ gives rise to a well-defined homomorphism in $\Hom(\ker \Sq^2,\Z/2)$ that determines $\ter(M)$.
Thus, to show \eqref{it:main2} and \eqref{it:main3}, we shall prove that the Arf invariant $\Arf(f^*i^*(\phi))$ coincides with the Kervaire--Milnor invariant $\tau(PD(f^*(\wh\phi)))$, where as before $\wh\phi\in H^2(K;\Z\pi)$ is a lift of $i^*\phi\in H^2(K;\Z/2)$. For this we will use the description of the Kervaire--Milnor invariant for $\pi_1$-trivial (embedded) surfaces from \cref{section:tau-surfaces}.

For a $\pi_1$-trivial, closed, oriented, generically immersed surface $F \colon \Sigma \looparrowright M$, the definition of $\tau(F)$ uses a quadratic refinement $\varpi \colon H_1(\Sigma;\Z/2) \to \Z/2$ of the $\Z/2$-intersection form of $\Sigma$ that uses, for each curve $\gamma$ on $\Sigma$, a relative Euler number and a count of intersections arising from $F(\gamma)$.

For the specific $F$ that arise in our situation, which will be embedded, we will show that after picking a correctly framed disc, the relative Euler number in the definition of $\varpi$ agrees with $\Upsilon$. (Recall that $\Upsilon$ is the quadratic refinement we use for computing the Arf invariant.) Then we will show that for such a disc the intersection component of the quadratic refinement $\varpi$ is always even, so does not contribute to the calculation of $\varpi$. It will follow that $\varpi$ and $\Upsilon$ coincide, and therefore the Arf invariant and the Kervaire--Milnor invariant of $F$ agree.
\end{remark}

Next, we want to produce, for each $\wh{\phi} \in H^2(K;\Z\pi)$, a suitable surface $F$ on which to compare the two invariants.
Recall that $H^2(K;\Z\pi)$ is isomorphic to the compactly supported cohomology $H^2_{\cs}(\wt K;\Z)$ of the universal cover $\wt K$ of $K$.
An element $\wh\phi\in H^2(K;\Z\pi)\cong H^2_{\cs}(\widetilde K;\Z)$ can be represented as a map $\wh\phi\colon \wt{K}\to S^2$ with compact support (i.e.\ the closure of the inverse image of $S^2 \sm \{\ast\}$ is compact). This follows from the fact that $\wt{K}$ is $2$-dimensional, and the definition of cohomology with compact support as a colimit of $H^2(\wt{K}, \wt{K} \sm L)$ over compact subsets $L \subseteq \wt{K}$.

Let $\wt{f} \colon \wt{M} \to \wt{K}$ be a lift of the map $f \colon M \to K$.
We want to consider $\wt{f}^*\wh\phi$ as an element of $H_{\cs}^2(\wt M;\Z)$, and thus we need the following lemma.

\begin{lemma}\label{lemma:wt-f-proper}
    The map $\wt f$ is proper, i.e.\ closed and preimages of compact sets are compact.
\end{lemma}

\begin{proof}
    Since $M$ is compact and both $M$ and $K$ are Hausdorff,  standard arguments show that the map $f$ is proper. This implies that $\wt f$ is proper since pullbacks of proper maps are proper. This is well-known but we give a short proof. Recall that a map $h$ is called \emph{universally closed} if every pullback of $h$ is closed. By \cite[\href{https://stacks.math.columbia.edu/tag/005R}{Tag 005R}]{stacks-project}, a map is proper if and only if it is univerally closed. Hence it remains to show that pullbacks of universally closed maps are universally closed. If $g$ is a pullback of a universally closed map $h$, then every pullback of $g$ is also a pullback of $h$ and hence is closed. It follows that $g$ is universally closed, as needed.
\end{proof}


\begin{lemma}\label{lem:inverse-image-representative}
	Let $x\in S^2$ be a regular value of $\wh\phi\circ \wt{f}\colon \wt{M}\to S^2$.
 Then the inverse image of $x \in S^2$ yields a surface $\wh{F} \colon \Sigma \to \wt{M}$ such that $\wh{F}(\Sigma)$ represents  $PD(\wt{f}^*(\wh\phi)) \in H_2(\wt{M};\Z)$.
\end{lemma}

\begin{proof}
	We start with a general remark about the compact case.
 For $Y$ a compact $4$-manifold, each cohomology class $y$ in $H^2(Y,\partial Y;\Z)$ can be represented by a map $h_y \colon (Y,\partial Y) \to (\mathbb{CP}^2,*)$ (upon post-composing with the inclusion $\CP^2 \to \CP^\infty \simeq K(\Z,2)$), and the inverse image of $\mathbb{CP}^1 \subseteq \mathbb{CP}^2$ is the Poincar\'{e} dual to the original class~$y$.

Since $\wt f$ is proper by \cref{lemma:wt-f-proper}, and $\wh{\phi}$ has compact support, it follows that $\wh \phi \circ \wt f\colon \wt M\to S^2$ has compact support. Take the composition with the inclusion $\iota \colon S^2 \to \CP^2$. This yields a map $\iota \circ \wh \phi \circ \wt f \colon \wt{M} \to \CP^2$ representing
$\wt{f}^*(\wh{\phi}) \in H^2_{\cs}(\wt{M};\Z)$.
We have an isomorphism
\[H^2_{\cs}(\wt{M};\Z) = \colim_{L} H^2(\wt{M},\wt{M} \sm L;\Z)\]
where $L$ belongs to the collection of compact subsets of $\wt{M}$ ordered by inclusions.
Our class $\wt{f}^*(\wh{\phi})$ is represented in the colimit by an element of $H^2(\wt{M},\wt{M} \sm L;\Z)$,  for some  compact set $L$ containing the support of $\iota \circ \wh \phi \circ \wt f$. We may and shall assume that $L$ is a compact codimension zero submanifold in~$\wt{M}$. We consider the maps shown in the next diagram.
\[
\begin{tikzcd}[row sep=small, column sep=small]
	H^2_{\cs}(\wt{M};\Z) & \ar[l] H^2(\wt{M},\wt{M} \sm L;\Z) \ar[r,"\cong","\operatorname{exc}"'] & H^2(L,\partial L;\Z) \ar[r,"\cong","PD"'] &
 H_2(L;\Z) \ar[r,"\operatorname{incl}"']&   H_2(\wt{M};\Z)\\
\wt{f}^*(\wh{\phi}) \arrow[u, phantom, sloped, "\in"] & \iota \circ \wh \phi \circ \wt f \ar[r,mapsto] \ar[l,mapsto] \arrow[u, phantom, sloped, "\in"]& \iota \circ \wh \phi \circ \wt f|_L \ar[r,mapsto]\arrow[u, phantom, sloped, "\in"] & (\iota\circ\wh \phi \circ \wt f)|_{L}^{-1}(\CP^1) \ar[r,mapsto]\arrow[u, phantom, sloped, "\in"] & (\iota\circ\wh \phi \circ \wt f)^{-1}(\CP^1) \arrow[u, phantom, sloped, "\in"]
\end{tikzcd}
\]
Consider $\iota \circ \wh \phi \circ \wt f \in H^2(\wt{M},\wt{M} \sm L;\Z)$. We argued above that this maps to the left to $\wt{f}^*(\wh{\phi})$. By naturality of cap products, $PD(\wt{f}^*(\wh{\phi}))$ is given by the image of $\iota \circ \wh \phi \circ \wt f$ in $H_2(\wt{M};\Z)$,  under the composition shown.  In the diagram we claim that this image is $(\iota\circ\wh \phi \circ \wt f)^{-1}(\CP^1)$.
To see this, use the fact for compact manifolds from the first paragraph of the proof, to see that Poincar\'e duality maps  $\iota \circ \wh \phi \circ \wt f|_L$ to the inverse image of $\mathbb{CP}^1 \subseteq \mathbb{CP}^2$, as shown. Since $\iota\circ\wh \phi \circ \wt f$ is supported in $L$, the claim follows.
We see that
	\[PD(\wt{f}^*(\wh{\phi})) = (\iota\circ\wh \phi \circ \wt f)^{-1}(\CP^1) = (\wh \phi \circ \wt f)^{-1}(x),\]
 which proves the desired statement.
 Here for the second equality we use that, since the map $\wh\phi\circ \wt{f} \colon \wt{M} \to \mathbb{CP}^2$ factors through $S^2$, the inverse image of a generic $\mathbb{CP}^1$ is the inverse image of the point $x\in S^2$.
\end{proof}

\begin{lemma}\label{lemma:F-has-right-properties}
After potentially perturbing the map $\wh\phi \colon \wt{K} \to S^2$, the composition $F\colon \Sigma \xrightarrow{\wh F} \wt{M} \to M$ is a $\pi_1$-trivial embedding with $\mu(F)=0$ and such that $[F]$ is $\RPT$-characteristic.
\end{lemma}
\begin{proof}
Note that $\wh F$ is an embedding, since it is the inverse image of the point $x \in S^2$. Since $\pi_1(\wt{M})=0$, $\wh F$ is $\pi_1$-trivial.
We can perturb the map $\wh\phi \colon \wt{K} \to S^2$ so that $(\wh\phi)^{-1}(x) \subseteq \wt{K}$ is a finite (coming from compact support) discrete set, which satisfies that no two points of $(\wh\phi)^{-1}(x)$ have the same image under $\wt{K}\to K$.  Then $F$ is still an embedded $\pi_1$-trivial surface, representing a class $[F] \in H_2(M;\Z\pi) \cong \pi_2(M)$.
By \cref{lem:spherically-char}, $[F]$ is $\RPT$-characteristic.  By \cref{lem:self-int-zero}, $\mu(F)=0$.
\end{proof}

\begin{definition}
	Let $(v_1,v_2)\in T_xS^2 \oplus T_xS^2$ be a framing of the point $x$. For each simple closed curve $\alpha$ in $\Sigma$ we pick a generically immersed disc $C_\alpha$ in $M$ with boundary $F(\alpha)$, such that the image of the normal vector of $S^1\subseteq D^2$ in
 \[T_{C_\alpha(y)}M\cong DF (T_{C_\alpha(y)}\Sigma) \oplus \nu_{F}^{M}|_{C_\alpha(y)}\]
 agrees with $(0,(\wh\phi\circ f)^*v_1)$ for every $y\in S^1$.  We can construct such a disc $C_{\alpha}$ by taking an annulus $S^1 \times I \subseteq M$ with $S^1 \times \{0\} = F(\alpha)$ and such that a nonvanishing section of $\nu_{S^1 \times \{0\}}^{S^1 \times I}$, pushed forward into $TM$, agrees with $(0,(\wh\phi\circ f)^*v_1)$. Then cap off $S^1 \times \{1\}$ with the trace of a null-homotopy in $M$.  We say that $C_\alpha$ is an \emph{$f$-cap} for $\alpha$.
\end{definition}

\begin{lemma}\label{lem:spin-bordism-equals-Euler-no}
	Let $\alpha$ be a simple closed curve on $\Sigma$ and let $C_{\alpha}$ be an $f$-cap for $\alpha$. The spin bordism class of $\alpha$, as an element of $\Omega_1^{\spin} \cong \Z/2$, is equal to the relative Euler number $e(C_\alpha)$ of $C_\alpha$.
\end{lemma}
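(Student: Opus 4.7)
The plan is to identify $[\alpha] \in \Omega_1^{\spin} \cong \Z/2$ and $e(C_\alpha) \bmod 2$ as the same element of $\pi_1(SO(3)) \cong \Z/2$ by comparing framings induced by $F$ and $C_\alpha$. The spin structure on $\alpha$ is determined by the canonical framing $\Phi_F := (\nu_\alpha^F, v_1, v_2)$ of $\nu_\alpha^M$, which combines the orientation-trivialization of $\nu_\alpha^F$ with the trivialization $(v_1, v_2)$ of $\nu_F^M$ pulled back from $T_xS^2$. Then $[\alpha] \in \Omega_1^{\spin}$ equals the $\pi_1(SO(3))$-obstruction to homotoping $\Phi_F$ (as a section of the frame bundle of $\nu_\alpha^M$) to a framing of $\nu_\alpha^M$ that extends over a disc bounded by $\alpha$; here we use that $M$ is spin to convert framings of $\nu_\alpha^M$ into spin structures on $\alpha$ and that two framings yield the same spin structure iff their difference loop is trivial in $\pi_1(SO(3))$.

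To produce such a bounding framing, I would use $C_\alpha$. The $f$-cap condition gives $\nu_\alpha^{C_\alpha} = \langle v_1\rangle$, so $\nu_{C_\alpha}^M|_\alpha$ is canonically identified with $\nu_\alpha^F \oplus \langle v_2\rangle$, carrying a natural ``geometric'' framing $\Phi_g$ using the two global sections $\nu_\alpha^F$ and $v_2$. Combined with $v_1 = \nu_\alpha^{C_\alpha}$, this gives a framing $\Phi_C$ of $\nu_\alpha^M$ that differs from $\Phi_F$ only by a constant cyclic permutation in $SO(3)$; as a loop $\alpha \to SO(3)$ this is null-homotopic, so $\Phi_F$ and $\Phi_C$ induce the same spin structure on $\alpha$. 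Since the pullback of $\nu_{C_\alpha}^M$ to $D^2$ is a rank-$2$ bundle over a contractible base, it has a global trivialization $\Psi$; combining $\Psi|_\alpha$ with $v_1$ produces a framing $\Phi_\Psi$ of $\nu_\alpha^M$ that extends over $C_\alpha$, hence induces the bounding spin structure on $\alpha$.

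It then remains to compute the difference between $\Phi_g$ and $\Psi|_\alpha$. By the very definition of the relative Euler number, the section $\nu_\alpha^F$ of $\nu_{C_\alpha}^M|_\alpha$ rotates $e(C_\alpha)$ times relative to the trivialization $\Psi|_\alpha$; hence $\Phi_g$ and $\Psi|_\alpha$ differ by a loop in $SO(\nu_{C_\alpha}^M|_\alpha) \cong SO(2)$ of winding $e(C_\alpha)$. Under the inclusion $SO(2) \hookrightarrow SO(3) \cong SO(\nu_\alpha^M|_\alpha)$ coming from writing $\nu_\alpha^M = \langle v_1\rangle \oplus \nu_{C_\alpha}^M|_\alpha$, this loop maps to $e(C_\alpha) \bmod 2 \in \pi_1(SO(3)) \cong \Z/2$, using the standard fact that $\pi_1(SO(2)) \twoheadrightarrow \pi_1(SO(3))$ is reduction modulo two. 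Therefore $\Phi_F$ and $\Phi_\Psi$ differ by $e(C_\alpha) \bmod 2$ in $\pi_1(SO(3))$, proving $[\alpha] = e(C_\alpha) \bmod 2$ in $\Omega_1^{\spin}$. The main point requiring care is orientation bookkeeping: one must fix the sign of $e(C_\alpha)$ and the orientations of the various framings so that the permutation relating $\Phi_C$ and $\Phi_F$ is orientation-preserving, and hence gives a constant, null-homotopic loop in $SO(3)$.
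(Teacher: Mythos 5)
Your argument is correct and is essentially the same as the paper's: both identify the spin bordism class of $\alpha$ with the obstruction to extending the framing of $\nu_\alpha^M$ induced by $F$ and the $S^2$-pullback over the $f$-cap $C_\alpha$, and both reduce that obstruction to $e(C_\alpha)\bmod 2$ via the passage from $\pi_1(SO(2))\cong\Z$ to the stable group $\Z/2$. The paper phrases the last step as ``$(w,v_2)$ stably extends over $C_\alpha$ iff $e(C_\alpha)$ is even'' rather than spelling out the $\pi_1(SO(2))\twoheadrightarrow\pi_1(SO(3))$ computation, but the content is the same.
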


\begin{proof}
	The bundle $\nu_{F(\alpha)}^{F(\Sigma)}$ is $1$-dimensional, and thus we obtain a canonical framing $w$  from the orientation. The framing $(w,(\wh\phi\circ f)^*v_1,(\wh\phi\circ f)^*v_2)$ of $\nu_{F(\alpha)}^M$ together with the given spin structure on $\nu_M^{\R^\infty}|_{F(\alpha)}$ determines the spin structure on $\alpha$.
	
	As $\nu_M^{\R^\infty}|_\alpha$ extends over $C_\alpha$ and $\nu_{F(\alpha)}^{C_\alpha}$ agrees with $(\wh\phi\circ f)^*v_1$, $\alpha$ is spin null bordant if and only if the framing $(w,(\wh\phi\circ f)^*v_2)$ stably extends over $C_\alpha$. Since $\nu_{C_\alpha}^M$ is 2-dimensional, the normal vector $w$ extends over ${C_\alpha}$ if and only if $(w,(\wh\phi\circ f)^*v_2)$ extends over ${C_\alpha}$. Thus $(w,(\wh\phi\circ f)^*v_2)$ can stably be extended over ${C_\alpha}$ if and only if the relative Euler number $e(C_\alpha)$ is even, so is zero modulo $2$.  This completes the proof of the lemma.
\end{proof}

We have one final lemma for the proof of \cref{thm:main-technical-thm-intro}.

\begin{lemma}\label{lem:int-points-even}
	Let $\alpha$ be a simple closed curve on $F$ and let $C_{\alpha}$ be an $f$-cap for $\alpha$. The interior of the image of~${C_\alpha}$ intersects~$F$ transversely in an even number of points.
\end{lemma}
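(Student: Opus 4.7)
My plan is to reduce $\#(\mathring C_\alpha \pitchfork F) \pmod{2}$ to a cohomological pairing on $K$ that vanishes by the asphericity of $B\pi$.

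First I would observe that $F = f^{-1}(Y)$ for the finite discrete set $Y := p_K(\widehat\phi^{-1}(x)) \subseteq K$, so intersections of $C_\alpha$ with $F$ in $M$ correspond to preimages of $Y$ under $f \circ C_\alpha : D^2 \to K$. Since $\alpha$ is connected and $f(\alpha) \subseteq Y$ is discrete, $f(\alpha)$ is a single point $q_0 \in Y$, so $f \circ C_\alpha$ factors through $\bar c : S^2 \to K$ (with $S^2 = D^2 / \partial D^2$) sending the collapsed boundary $p$ to $q_0$. The $f$-cap construction places the annulus portion of $C_\alpha$ in the normal direction $(\widehat\phi \circ f)^* v_1$ off $F$, so it contributes no interior intersections; thus interior intersections biject with preimages of $Y$ under $\bar c$ away from $p$, giving
\[
\#\bar c^{-1}(Y) = \#(\mathring C_\alpha \pitchfork F) + 1.
\]

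Next I would argue that for each $2$-cell $e$ of $K$ the count $|Y \cap e|$ equals the number of $\pi$-translates of $\tilde e$ in $\tilde K$ on which $\widehat\phi$ has nonzero degree, hence $|Y \cap e| \equiv \red_2 \widehat\phi(e) \pmod{2}$. Thus $Y$ realises a mod-$2$ Poincar\'e dual of $\red_2 \widehat\phi \in H^2(K;\mathbb{Z}/2)$, so for any map $\bar c' : S^2 \to K$ transverse to $Y$,
\[
\#(\bar c')^{-1}(Y) \equiv \langle \red_2 \widehat\phi, \bar c'_*[S^2]\rangle_K \pmod{2}.
\]
Using $\red_2 \widehat\phi = i^* \phi$, this rewrites as $\langle \phi, (i \circ \bar c)_* [S^2]\rangle_{B\pi}$, which vanishes because $S^2$ is simply connected and $B\pi$ is aspherical: the map $i \circ \bar c$ lifts through the contractible universal cover $\widetilde{B\pi}$, so $(i \circ \bar c)_*[S^2] = 0 \in H_2(B\pi;\mathbb{Z}/2)$.

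Finally I would compare $\#\bar c^{-1}(Y)$ with the transverse count. The $f$-cap framing forces $\bar c$ near $p$ to map to the one-dimensional arc $\gamma(s) := q_0 + s (\widehat\phi)^* v_1 \subseteq K$, so $\bar c$ has local degree zero at $p$; a small generic perturbation $\bar c'$ of $\bar c$ near $p$ therefore replaces the single degenerate preimage $p$ by an even number of transverse preimages of $q_0$, changing the count by an odd number. Combining the three steps gives
\[
\#(\mathring C_\alpha \pitchfork F) = \#\bar c^{-1}(Y) - 1 \equiv \#(\bar c')^{-1}(Y) \equiv 0 \pmod{2},
\]
as required. The hardest part will be the local analysis at $p$, specifically verifying that the local degree of $\bar c$ at $p$ vanishes and hence that a generic perturbation yields an even number of nearby transverse preimages.
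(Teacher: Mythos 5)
Your proposal is correct and takes essentially the same route as the paper: factor $f\circ C_\alpha$ through $S^2 \to K$, translate the interior intersection count into the mod-2 pairing of $i^*\phi$ against the resulting $2$-sphere, and conclude from the asphericity of $B\pi$ that this pairing vanishes; and both proofs handle the collapsed boundary point by observing that the $f$-cap framing forces the map to have $1$-dimensional image (hence local degree zero) there. The only cosmetic difference is that the paper packages the intersection count as the degree of an explicit map $\psi\circ j\colon S^2\to S^2$, where $\psi\colon K\to S^2$ collapses everything outside small neighbourhoods of $Y$ and represents $i^*\phi$, which avoids having to invoke ``Poincar\'{e} duality'' on the non-manifold $K$ and makes the perturbation step at the collapsed boundary point cleaner.
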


\begin{proof}
	The image of the boundary $S^1$ under $f\circ {C_\alpha} \colon D^2 \to K$ is a point. Thus $f\circ {C_\alpha}$ factors as $f\circ {C_\alpha}\colon D^2 \to S^2\xrightarrow{j} K$, where $j$ is defined by this factorisation.
	
	Recall that we have a map $\wh\phi \colon \wt{K} \to S^2$ representing $\wh\phi \in H^2_{cs}(\wt{K};\Z) \cong H^2(K;\Z\pi)$ that lifts $i^*\phi \in H^2(K;\Z/2)$, where $i \colon K \to B\pi$ is the inclusion of the 2-skeleton.  Let $p\colon \wt{K}\to K$ be the projection and define a map $\psi\colon K\to S^2$ that sends the points $p((\wh\phi)^{-1}(x))$ to $x$ and sends everything outside a small neighbourhood of these points to the base point of $S^2$.
 Choose a model for $K(\Z/2,2)$ with 2-skeleton $S^2$.
 Then $\psi\colon K\to S^2$ composed with the inclusion $\varrho \colon S^2 \to K(\Z/2,2)$ represents $i^*\phi \in H^2(K;\Z/2)$.
	
	Since the normal bundle of $S^1 \subseteq D^2$ under $D{C_\alpha} \colon TD^2 \to TM$ agrees with the direction of $(\wh\phi\circ f)^*v_1$, and $F(\Sigma)$ is the preimage of the points $p((\wh\phi)^{-1}(x))$, the mapping degree of $\psi\circ j \colon S^2\to S^2$ agrees with the number of transverse intersections of $\mathring{C}_\alpha$ with $F$.
	
	Compose $\psi \circ j \colon S^2 \to S^2$ with the inclusion of the $2$-skeleton $\varrho \colon S^2 \to K(\bbZ/2,2)$. The map $\varrho \circ \psi\circ j \colon S^2 \to K(\Z/2,2)$ factors through $B\pi$ by definition of $\psi= i^*\phi$, and therefore is null homotopic, since $B\pi$ is aspherical. It follows that the mapping degree of $\psi\circ j$ is even, which proves the lemma.
\end{proof}

\begin{proof}[Proof of \cref{thm:main-technical-thm-intro}]
	As already mentioned, \eqref{it:main1} follows directly from \cref{lem:spherically-char} and \cref{lem:self-int-zero}.

 By \cref{lem:arf,lem:arf-homo-plus-computes-ter}, $\ter(M)$ can be computed using the codimension two Arf invariant and the latter determines a homomorphism.
 Hence to prove \eqref{it:main2} and \eqref{it:main3} it suffices to show $\Arf(F)=\tau(F)$ for the surface $F$ defined in \cref{lemma:F-has-right-properties}: it will follow that $\wh{\tau}_{M,f}$ is a homomorphism and that this homomorphism maps to $\ter(M)$ under the composition displayed in \cref{thm:main-technical-thm-intro}~\eqref{it:main3}. In that lemma we also showed that $F$ is $\pi_1$-trivial, $\RPT$-characteristic and $\mu(F)=0$. Therefore we can compute and compare both $\Arf(F)$ and $\tau(F)$.

	The Arf invariant of $F$ depends only on the relative Euler numbers of the $f$-caps by \cref{lem:spin-bordism-equals-Euler-no},
	whereas the $\tau$ invariant depends on the relative Euler number \emph{and} the intersections of the form $\mathring{C} \pitchfork F$.  \cref{lem:int-points-even} shows that the latter do not contribute to the calculation of the $\tau$ invariant.
	Therefore we have $\Arf(F)=\tau(F)$, as desired, which completes the proof.
\end{proof}

\bibliographystyle{alpha}
\bibliography{classification}
\end{document}